\newcommand{\R}{\mathbb{R}}
\newcommand{\hx}{\hat{x}}
\newtheorem{lemma}{Lemma}[section]
\newtheorem{theorem}{Theorem}[section]
\newtheorem{proposition}{Proposition}[section]
\newtheorem{corollary}{Corollary}[section]
\newtheorem{remark}{Remark}[section]
\begin{document}

\title{Singularities Almost Always Scatter: Regularity Results for Non-scattering Inhomogeneities}
\author{Fioralba Cakoni and Michael S. Vogelius \footnote{Department of Mathematics, Rutgers University, New Brunswick,  New Jersey,  USA 
 (fc292@math.rutgers.edu) and  (vogelius@math.rutgers.edu)}}
\date{\empty}
\maketitle

 \begin{abstract}
 \noindent
In this paper we examine necessary conditions for an inhomogeneity to be non-scattering, or equivalently, by negation, sufficient conditions for it to be scattering. These conditions are formulated in terms of the regularity of the boundary of the inhomogeneity. We examine broad classes of incident waves in both two and three dimensions. Our analysis  is greatly influenced by the analysis carried out by Williams \cite{pom3} in order to establish that a domain, which does not possess the Pompeiu Property, has a real analytic boundary. That analysis, as well as ours, relies crucially on classical free boundary regularity results due to Kinderlehrer and Nirenberg \cite{pom000}, and Caffarelli \cite{pom00}.
 \end{abstract}

\noindent{\bf Key words:}  inverse scattering, inhomogeneous media,  non-scattering,  transmission eigenvalues, free boundary.\\
\noindent{\bf AMS subject classifications:} 35R30, 35J25, 35P25, 35P05

\section{Introduction}\label{form}
A perplexing question in scattering theory is whether there  are incoming time harmonic waves, at particular frequencies, that are not scattered by a given inhomogeneity, in other words the inhomogeneity is invisible to probing by such waves. We refer to wave numbers, that correspond to frequencies for which there exists a non-scattering incoming wave, as non-scattering.  The attempt to provide an answer to this question has led to the so-called transmission eigenvalue problem with the wave number as the eigen-parameter. This eigenvalue problem has a deceptively simple formulation, namely two elliptic PDEs in a bounded domain (representing the inhomogeneity)  with a single set of Cauchy data on the boundary. However, the problem is a non-selfadjoint eigenvalue problem with  challenging mathematical structure. The  non-scattering wave numbers form a subset of the real transmission eigenvalues. We refer the reader to the monograph \cite{CakoniColtonHaddar2016} for a comprehensive discussion of the transmission eigenvalue problem,  and to \cite{nguyen}, \cite{vodev} for the most up-to-date results on the spectral analysis for the scalar problem considered here.  A real transmission eigenvalue is not necessarily a non-scattering wave number, and it is desirable  to understand which (if any) are. Besides being mathematically appealing, this question is also important from an application point of view. In particular, at a non-scattering wave number the relative scattering operator \cite{res1} (otherwise known as the far field operator \cite{coltonkress}) is not injective. This causes the failure of some reconstruction methods for solving the inverse scattering problem.  Despite some progress made for special geometries \cite{liu, nonscat, nn5, ElH18, HSV16, vx}, the question of existence/non-existence of non-scattering wave numbers for a general inhomogeneity has been largely open until now. The main contribution of our  paper is that it provides finer necessary regularity conditions on the geometry of an inhomogeneity in order that it be non-scattering, or equivalently, it provides more general, sufficient conditions for it to be scattering. Let us proceed to formulate the specific scattering problem.

\vskip 10pt

\noindent
We consider Helmholtz scattering by an inhomogeneous medium of bounded support. We denote the inhomogeneity by $D\subset {\mathbb R}^m$, $m=2,3$,  and assume that $D$ is a bounded and simply connected region with a Lipschitz boundary $\partial D$. $\nu$ denotes the outward unit normal vector to $\partial D$ defined almost everywhere on $\partial D$. We assume that the inhomogeneity is situated in a homogeneous background and we denote by $n$ its refractive index. $n$ is a real valued function, with $n\in L^{\infty}(D)$, and   $n(x)\geq n_0>0$  almost everywhere in $D$. \footnote{The assumptions about simply connectedness of $D$, and strict positivity of $n$, could be considerably relaxed, but for the sake of clarity of exposition we have decided not to do so.} The propagation of a time harmonic monochromatic wave in homogeneous free space is modeled by the Helmholtz equation
\begin{equation}\label{inc}
\Delta v+k^2v=0
\end{equation}
where $k$ which is proportional to the frequency (e.g. in ${\mathbb R}^3$, $k=\omega/c_0$ where $c_0$ is  the sound speed of the homogeneous background). Let $q$ denote the function
$$
q(x)=\begin{cases} n(x) & ~x \in D\\
1& ~x \in \mathbb{R}^m \setminus \overline{D}~.
\end{cases} 
$$
We now formulate the {\it direct scattering  problem} for the inhomogeneous media $(D,n)$.  By an incident wave $v$ we understand a function that  satisfies (\ref{inc}) 
in  ${\mathbb R}^m$, except for possibly a subset of measure zero in the exterior of $\overline{D}$; this could be  a single point, for point sources, or a surface, for surface potentials. We decompose the total field as
$u_{tot}= u+v$, where $u$ represents the scattered field. The scattered field now satisfies
\begin{equation}\label{total}
\Delta u+k^2qu=k^2(1-q)v \qquad \qquad \mbox{in}\;\; {\mathbb R}^m~,
\end{equation}
along with the  outgoing Sommerfeld radiation condition
\begin{equation}\label{somer}
\displaystyle\lim_{r\to\infty}r^{\frac{m-1}{2}}\left(\frac{\partial u}{\partial r}-iku\right)=0~,
\end{equation}
as  $r:=|x|\to \infty$  uniformly with respect to $\hat{x}:=x/|x|$   (see e.g \cite{coltonkress}). 
\noindent
The scattered field $u$, which is in $H^2_{loc}({\mathbb R}^m)$, assumes the  following asymptotic behavior  as $r:=|x|\to \infty$
$$
u(x)=\frac{\exp(ikr)}{r^{\frac{m-1}{2}}}u^\infty(\hx)+O\left(r^{-\frac{m+1}{2}}\right)~,
$$
where the function $u^\infty(\hx)$ defined on the unit $m-1$ sphere is called the far field pattern of the scattered field $u$. Rellich's Lemma  (see e.g \cite{coltonkress}) states that the vanishing of $u^\infty(\hat x)$ on the unit $m-1$ sphere for all directions $\hat x$ implies vanishing of the scattered field $u(x)=0$ for all $x\in {\mathbb R}^m\setminus \overline{D}$. A natural question is:  given the inhomogeneity $(D, n)$, does there exist a wave number $k>0$ and an incident wave $v$ such that the far field pattern $u^\infty:=u_v^\infty$  of the corresponding solution of  (\ref{total}) is identically zero? Such an incident field is referred to as a non-scattering incident wave and the corresponding $k>0$ as a non-scattering wave number. The non-scattering phenomenon is equivalent to the existence of $u\in H^2_0(D)$ solving
\begin{equation}\label{nsp}
\Delta u+k^2nu=k^2(1-n)v \qquad \qquad \mbox{in}\;\; D
\end{equation}
where 
$$H^2_0(D):=\left\{u\in H^2(D), \mbox{ such that } \, u=\displaystyle{\frac {\partial u}{\partial \nu}}=0 \; \mbox{ on }\, \partial D\right\}~.\footnote{ $H^2_0(D)$ is the closure of $C_c^\infty(D)$ in $H^2(D)$ -- for a Lipschitz domain it coincides with those $H^2(D)$ functions, which when extended by zero outside $D$ remain in $H^2(\mathbb{R}^m)$. For a Lipschitz domain $H^2_0(D)$ also coincides with those functions $u \in H^2(D)$ for which $u$ and $\frac{\partial u}{\partial \nu}$ (defined in the sense of traces) vanish on $\partial D$.}
$$  
\noindent
If we include $v$, and focus on $D$, this may then be written
\begin{eqnarray}
\Delta u+k^2nu=k^2(1-n)v &\qquad \qquad \mbox{in}&\;\; D~,\label{te1}\\
\Delta v+k^2v=0 \qquad &\qquad \qquad \mbox{in}&\;\; D~,\\
u=\displaystyle{\frac {\partial u}{\partial \nu}}=0 \qquad &\qquad \qquad \mbox{on}&\;\; \partial D\label{te2}~,
\end{eqnarray}
with $u\in H^2(D)$. The equations (\ref{te1})-(\ref{te2}), with the requirement that $v\in L^2(D)\setminus \{0\}$ and $u\in H^2(D)$, are equivalent to the statement that $k$ is a transmission eigenvalue, with corresponding eigenvector $(u,v)$. $k$ being a transmission eigenvalue is therefore a necessary condition for $k$ being a non-scattering wave number, corresponding to the incident wave $v$ (defined on all of $\mathbb{R}^m$). Whereas transmission eigenvalues exist for quite general (non-smooth) domains, the results we establish in this paper show that the existence of non-scattering wave numbers (for regular $n$) imply some degree of regularity of the boundary $\partial D$ (for quite general incident waves). In terms of the transmission eigenvalue problem (\ref{te1})-(\ref{te2}), formulated only on $D$, similar regularity results would follow if we were to insist that $v$ be appropriately regular up to the boundary (on top of being in $L^2(D)$). This may also be seen as a reflection of the fact that a transmission eigenvalue only is a non-scattering wave number if the corresponding $v \in L^2(D)$ can be extended to a solution of the Helmholtz equation in the exterior of $D$. We recall that, if $n-1$ is of one sign in a neighborhood of the boundary $\partial D$, then  the set of transmission eigenvalues (possibly complex) is at most discrete with infinity as the only possible accumulation point. Furthermore if $n-1$ is of one sign in the entire $D$, then there exists an infinite sequence of real transmission eigenvalues (see \cite{1} and also \cite{CakoniColtonHaddar2016}). However, this paper concerns the existence of non-scattering wave numbers, and our approach does not require any knowledge about the spectrum of the transmission eigenvalue  problem.

\vskip 10pt

\noindent
In the case  of spherically symmetric media, i.e., when $D$ is a ball of radius $R$ centered at the origin, and $n:=n(r)$ depends only on the radial variable,   it is possible to show by separation of variables  that $k>0$ being a transmission eigenvalue is a necessary and sufficient condition for $k$ being a non-scattering wave number. Furthermore, the non-scattering wave numbers form an infinite discrete set with $+\infty$ as the only accumulation point. In this case the non-scattering  incident waves are superposition of plane waves, otherwise known as Herglotz wave functions, with particular densities; each density associated with an infinite set of  non-scattering wave numbers (see e.g. \cite[Chapter 10]{coltonkress} and \cite{vx}). The spherically  symmetric configuration is unstable  with respect to non-scattering. Vogelius and Xiao  in \cite{vx}  have shown  in ${\mathbb R}^2$, and for constant refractive index $n\neq 1$, that  if the disk is perturbed even slightly to a proper ellipse with arbitrarily small eccentricity, then there exist at most finitely many positive wave numbers for which a  Herglotz wave  function with a fixed, smooth, non-trivial density can be non-scattering. When the boundary of the inhomogeneity $D$ contains a corner, then the set of non-scattering wave numbers is empty, provided $n-1\neq 0$ at the corner, and under some  local regularity  on $n$. This result was first proven by  Bl{\aa}sten, P\"aiv\"arinta   and Sylvester in \cite{nonscat}  for a right corner, followed by \cite{PSV17} for a convex corner, and their analysis employs the so-called complex geometric solutions for the Helmholtz equation. This approach is generalized in \cite{nn5} to the scattering problem with a general operator of divergence form  (instead of the Laplacian). The most comprehensive analysis, implying that corner and edge singularities always scatter in ${\mathbb R}^m$,  is given  by Elschner and Hu in  \cite{ElH18}, based on a refined corner singularity analysis of the solution to (\ref{nsp}).  All these scattering results for geometries with corners are valid without any restrictions on the incident wave, and they provide the  foundation for  proving that  a convex polygonal inhomogeneity is uniquely determined from scattering data corresponding to one single incident wave \cite{HSV16}. Up to now there is a large gap, between spherically symmetric media and inhomogeneities containing a corner, in which little is known about non-scattering. In fact, nothing is known for general smooth domains $D$, with the exception of the partial results  in \cite{vx}  and \cite{liu}. In \cite{vx}  it is shown that given any smooth, strictly convex  domain in ${\mathbb R}^2$, there exist at most finitely many positive wave numbers $k$ for which an (arbitrary but fixed)  incident plane wave  can be non-scattering. In \cite{liu} the authors  prove that  inhomogeneities, containing a boundary point of high curvature (near which the inhomogeneity could be analytic) scatter any  incident field, whose modulus is bounded away from zero by a constant depending on the curvature and the value of the contrast $n-1$ at this point. Our paper substantially contributes to filling this gap.

\vskip 10pt
\noindent
Our main results are stated precisely in the next section.  Roughly speaking,  under the global  assumptions of a Lipshitz boundary $\partial D$, and $n\in L^\infty(D)$,  we show that  if there is a point  $x_0\in \partial D$,  such that $n$ is analytic in a neighborhood of $x_0$, but the boundary is not  analytic in any neighborhood of $x_0$, then every  incident field $v$ is scattered, provided $(n(x_0)-1)v(x_0)\neq 0$.  We establish a similar result for $n$ that are less regular locally near $x_0$, but still in $C^{1,1}$. In this case we show that if the boundary is not sufficiently smooth locally (related to the order of smoothness of $n$), then every incident  wave is  scattered,  again provided $(n(x_0)-1)v(x_0)\neq 0$. Although our results address domains with corners (in fact we only require Lipschitz boundaries) they are only valid provided the incident field $v$ is non-vanishing at $x_0$ (here: the corner). This is not required by the prior results on corners,  mentioned above. A non-vanishing condition on the incident fields  is essential to our approach. Such non-vanishing holds for plane waves or point sources; it is unclear exactly what limitations this imposes for generic Herglotz waves or for generic real analytic solutions to the  Helmholtz equation. However, in ${\mathbb R}^2$ and for  analytic refractive index $n$ near the boundary, we prove that there exist at most finitely many positive wave numbers $k$ for which one may find a non-scattering Herglotz wave function (with density in a fixed compact subset of $C^1$), unless $\partial D$ is almost everywhere analytic (see Section \ref{appl} for the appropriate definition). 

\vskip 10pt
\noindent
As a direct consequence of the proof of our main results, we conclude that at a transmission eigenvalue, the part $v$ of the transmission eigenfunction lacks sufficient  regularity near a singular boundary point $x_0$, unless it vanishes at this point, thus recovering similar results  for the case of  corners in \cite{nonscat4} and  \cite{BLLW17}.  For a precise statement of this result, see the end of Section \ref{proof}. Our analysis also yields results concerning the regularity of the support of non-radiating sources \cite{kusiak}. These results are discussed in the last section of this paper; they substantially generalize similar results in \cite{nonscat4} for sources whose support contains corner singularities.

\vskip 10pt
\noindent
At the core, our analysis relies on viewing  the boundary with vanishing Cauchy data as a free boundary, and applying the free boundary regularity results of  Caffarelli \cite{pom00},   and Kinderlehrer  and Nirenberg \cite{pom000} for second order elliptic equations. There is a striking similarity in the mathematical structure of the problem of non-scattering inhomogeneities, and the problem of domains that do not possess the  Pompeiu property \cite{pom0, pom2, pom1}.  Regularity properties of the latter are established by Williams \cite{pom3}, and the analysis here in several places borrows significantly from his original work.

\section{Statement of our main results}
In this section we state the main results of this paper. These results are stated in terms of sufficient conditions of non-smoothness of $\partial D$ for scattering to occur for a given incident wave. By negation they could as well have been stated as necessary smoothness conditions that follow from non-scattering. In the formulation the incident wave is a solution to
\begin{equation}\label{incB}
\Delta v+k^2v=0 
\end{equation}
in $\mathbb{R}^m$, except possibly a set of measure zero, external to $\overline{D}$. Actually it suffices that $v$ is a solution to this equation in $D$, with $v$ real analytic on $\overline D$.   In the formulation of our main results we also refer to the region $D_\delta\subset D$, defined  by 
$$D_\delta:=\left\{x\in D, \; dist(x,\partial D)<\delta\right\}\;\; \;\;\; \mbox{for some fixed $\delta>0$}.$$
The proof of these results is postponed to Section \ref{proof}.

\begin{theorem}\label{scatnon1} Let $k>0$ be a fixed wave number. Assume the positive refractive index  $n$ is in $L^\infty(D)$, and that the boundary $\partial D$ is  Lipschitz. Consider an incident field $v$ satisfying (\ref{incB}).  Assume  that  $n$ is real analytic in $\overline{D_\delta}$, and there exists $x_0\in\partial D$   such $(n(x_0)-1)v(x_0)\neq 0$. Assume furthermore that $\partial D\cap B_r(x_0)$ is not real analytic for any ball $B_r(x_0)$ of radius $r$ centered at $x_0$.  Then the incident field $v$ is scattered by the inhomogeneity $(D, n)$. In other words: there exists no $H^2_0(D)$ solution to (\ref{nsp}).
\end{theorem}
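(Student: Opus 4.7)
The plan is to argue by contradiction: assume there is $u\in H^2_0(D)$ solving (\ref{nsp}), and derive that $\partial D$ must, after all, be real analytic in some neighborhood of $x_0$, contradicting the hypothesis. I will work in a small ball $B=B_r(x_0)$, chosen small enough that both $n$ (real analytic on $\overline{D_\delta}$) and $v$ (real analytic on $\overline D$, being a Helmholtz solution there) admit real analytic extensions to $B$, and small enough that $g(x):=k^2(1-n(x))v(x)$ is analytic and nowhere vanishing on $B$; this is possible because $g(x_0)\neq 0$ by hypothesis. I also keep in mind that the zero extension of $u$ to $\mathbb{R}^m$, call it $\tilde u$, lies in $H^2(\mathbb{R}^m)$, by the footnote characterization of $H^2_0$ for Lipschitz domains.

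Next I would reduce the problem to a homogeneous two-phase free boundary problem with analytic data. By standard theory for elliptic equations with analytic coefficients (Cauchy--Kovalevskaya, or solving an analytic Dirichlet problem on $B$ shrunk so that $-\Delta - k^2n$ is invertible), construct a real analytic particular solution $\phi$ on $B$ to $\Delta\phi + k^2 n\phi = g$. Define
$$
\Psi(x) := \begin{cases} u(x) - \phi(x), & x\in D\cap B,\\ -\phi(x), & x\in B\setminus\overline D. \end{cases}
$$
Since $u$ and $\partial u/\partial\nu$ vanish on $\partial D$, both $\Psi$ and its weak gradient are continuous across $\partial D\cap B$, so $\Psi\in H^2(B)$. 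A direct computation yields $\Delta \Psi+k^2n\Psi=0$ in $D\cap B$ and $\Delta\Psi+k^2n\Psi=-g$ in $B\setminus\overline D$. Thus $\partial D\cap B$ plays the role of a free boundary across which the source term of an elliptic PDE for $\Psi$, with analytic coefficients, undergoes an analytic, non-vanishing jump.

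Finally, I would invoke the free-boundary regularity results cited in the introduction, following the template of Williams' treatment of the Pompeiu problem. The non-degeneracy $g(x_0)\neq 0$ together with the Lipschitz regularity of $\partial D$ enables Caffarelli's theorem to upgrade $\partial D\cap B$ to $C^{1,\alpha}$ near $x_0$. Once the free boundary is at least $C^1$ and the equation has real analytic coefficients and analytic source, the Kinderlehrer--Nirenberg regularity theorem promotes $\partial D$ to real analytic in some neighborhood of $x_0$, which is the desired contradiction.

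The main technical obstacle will be verifying that Caffarelli's theorem genuinely applies in the present setting: the classical obstacle-problem formulations impose a sign condition on the solution, while $u$ has no a priori sign. The remedy, following Williams, is to extract from $g(x_0)\neq 0$ and the vanishing Cauchy data of $u$ on $\partial D$ a quadratic non-degeneracy estimate for $\tilde u$ from the $D$-side of $\partial D$ near $x_0$; this places the two-phase problem for $\Psi$ within the scope of the free-boundary framework. The lower-order term $k^2nu$ is, as its name suggests, lower order and is absorbed by a standard perturbation argument; the heart of the matter is the interplay between the Lipschitz-to-$C^{1,\alpha}$ step of Caffarelli and the $C^1$-to-analytic step of Kinderlehrer--Nirenberg.
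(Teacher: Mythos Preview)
Your overall strategy matches the paper's: argue by contradiction, then feed the problem into Caffarelli's Lipschitz-to-$C^1$ step and Kinderlehrer--Nirenberg's $C^1$-to-analytic step, with Williams' argument bridging the gap. You also correctly identify the sign/non-degeneracy condition as the heart of the matter. That said, your execution diverges from the paper in a few places, and one of these divergences is a genuine detour.

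\medskip
\textbf{The two-phase reformulation is unnecessary.} The paper does not subtract an analytic particular solution or pass to your function $\Psi$. It works directly with the real part $w=\Re(u)$, extended by zero outside $D$; this is already in the one-phase form $\Delta w=g$ in $D\cap B_r$, $w=\partial_\nu w=0$ on $\partial D\cap B_r$, to which Caffarelli's theorem (Theorem~\ref{th2}) applies verbatim once the sign of $w$ is known. Your $\Psi$ does not vanish on $\partial D$ (it equals $-\phi$ there), so it does not fit Theorem~\ref{th2} as stated; you would have to unwind back to $\tilde u=\Psi+\phi$ anyway. Relatedly, you never pass to real (or imaginary) parts, but $u$ is complex-valued and Theorems~\ref{th1}--\ref{th2} are stated for real solutions; the paper handles this explicitly.

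\medskip
\textbf{Two preliminary steps you skipped.} Before invoking Caffarelli, the paper establishes (Proposition~\ref{regs}, via the volume potential Lemmas~\ref{c1}--\ref{c2}) that $w\in C^{1,1}(\overline D\cap B_r)$ and that $g=-k^2(nw+(n-1)\Re v)$ extends $C^1$ across $\partial D$. These are inputs to Theorem~\ref{th2} that do not come for free from $u\in H^2_0(D)$. Your outline jumps straight to Caffarelli without securing them.

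\medskip
\textbf{On the sign condition.} What Theorem~\ref{th2} actually needs is $w\le 0$ (or $\ge 0$) near $x_0$, not merely a quadratic non-degeneracy estimate. The paper devotes Proposition~\ref{prep2} to this: one shows that the non-tangential limits of all second derivatives of $w$ exist a.e.\ on $\partial D$, computes them (the tangential ones vanish, the normal one has a sign from the equation and the hypothesis $(n-1)\Re v\neq 0$), and then uses a subharmonic barrier argument to propagate the sign of $w_{m,m}$ into the interior. You are right that this is ``following Williams,'' but it is the substantive work of the proof, and your phrase ``quadratic non-degeneracy'' undersells what is actually required and proved.
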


\noindent
 For less regular refractive index $n$ there is a similar result. 
\begin{theorem}\label{scatnon2} Let $k>0$ be a fixed wave number. Assume the positive refractive index $n$ is in  $L^\infty(D)$, and that the boundary $\partial D$ is  Lipschitz. Consider an incident field $v$ satisfying (\ref{incB}).  Assume that $n\in C^{m, \mu}(\overline{D_\delta})\cap C^{1,1}(\overline{D_\delta})$  for $m \geq 1$, $0<\mu<1$, and there exists $x_0\in \partial D$ such that $(n(x_0)-1)v(x_0)\neq 0$. Assume furthermore that  $\partial D\cap B_r(x_0)$  is not of class $C^{m+1, \mu}$, for any ball $B_r(x_0)$ of radius $r$ centered at $x_0$.  Then the incident field $v$ is scattered by the inhomogeneity $(D, n)$. In other words: there exists no $H^2_0(D)$ solution to (\ref{nsp}).
\end{theorem}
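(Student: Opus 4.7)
My plan is to argue by contradiction: suppose that $u\in H^2_0(D)$ solves (\ref{nsp}); I will then show, under the hypotheses of the theorem, that $\partial D\cap B_{r}(x_0)$ must be of class $C^{m+1,\mu}$ for some $r>0$, contradicting the non-regularity assumption. The reformulation that drives the argument is that the vanishing Cauchy data on $\partial D$ let us view $\partial D$ as a free boundary. Extend $u$ by zero outside $D$; since $D$ is Lipschitz and $u=\partial u/\partial\nu=0$ on $\partial D$, the extension $\tilde u$ lies in $H^2_{\mathrm{loc}}(\mathbb{R}^m)$, and setting $q=n$ in $D$ and $q=1$ outside one obtains
\[
\Delta \tilde u+k^2 q\,\tilde u=k^2(1-q)v \qquad \text{a.e.\ in } \mathbb{R}^m.
\]
Fix a ball $B_r(x_0)$ so small that $B_r(x_0)\cap\overline{D}\subset\overline{D_\delta}$, extend $n$ to a positive function $\tilde n$ of class $C^{m,\mu}\cap C^{1,1}$ on all of $B_r(x_0)$ (Whitney/Stein extension to the exterior side), and use that $v$, being real analytic on $\overline{D}$, is already defined and regular on $B_r(x_0)$. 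By continuity of $(\tilde n-1)v$ at $x_0$ together with $(n(x_0)-1)v(x_0)\neq 0$, shrinking $r$ I may assume $(\tilde n-1)v$ is uniformly bounded away from zero on $B_r(x_0)$.

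Rewrite the equation as $\Delta\tilde u=F$, where $F:=k^2(1-q)v-k^2 q\,\tilde u$. Then $F\equiv 0$ on $B_r(x_0)\setminus\overline{D}$, while on $D\cap B_r(x_0)$ interior elliptic regularity applied to (\ref{nsp}) gives $u\in C^{m+2,\mu}_{\mathrm{loc}}(D)$ and hence $F\in C^{m,\mu}_{\mathrm{loc}}(D)$; furthermore, since $u\to 0$ at $\partial D$ in the trace sense, $F\to k^2(1-n)v$ there, which by construction has a definite sign and is uniformly bounded away from zero near $x_0$. Thus $\Delta\tilde u$ exhibits a uniform one-sided jump across $\partial D\cap B_r(x_0)$: this is precisely the free-boundary structure treated by Caffarelli \cite{pom00} and Kinderlehrer--Nirenberg \cite{pom000}, and exploited by Williams \cite{pom3} in the Pompeiu problem.

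The regularity of $\partial D$ is then bootstrapped in two stages. First, because $F$ is bounded, Calder\'on--Zygmund theory gives $\tilde u\in W^{2,p}_{\mathrm{loc}}$ for every $p<\infty$, and Caffarelli's theorem, using the Lipschitz regularity of $\partial D$, the uniform non-degeneracy provided by $F(x_0)\neq 0$, and the one-sided condition $\tilde u\equiv 0$ on $B_r(x_0)\setminus\overline{D}$, upgrades $\partial D\cap B_{r'}(x_0)$ to $C^{1,\alpha}$ for some $r'<r$ and $\alpha\in(0,1)$. Second, once $\partial D$ is of class $C^{1,\alpha}$, Schauder estimates up to the boundary applied to $\Delta u+k^2 nu=k^2(1-n)v$ with vanishing Cauchy data yield $u\in C^{m+2,\mu}(\overline{D\cap B_{r''}(x_0)})$ for some $r''<r'$; hence $F$ is $C^{m,\mu}$ up to $\partial D$ from the interior. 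The Kinderlehrer--Nirenberg free-boundary regularity theorem then upgrades the $C^1$ free boundary to $C^{m+1,\mu}$, yielding the desired contradiction.

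The hardest step will be the transition from Lipschitz to $C^1$ via Caffarelli: one must verify the precise geometric hypotheses of that theorem (e.g.\ positive density on each side, Lipschitz graph representation, the one-sided vanishing of $\tilde u$) and ensure the non-degeneracy of $\tilde u$ at $x_0$. This is exactly where the hypothesis $(n(x_0)-1)v(x_0)\neq 0$ is indispensable, since it furnishes the quantitative non-degeneracy of the Laplacian near $x_0$. The subsequent Schauder and Kinderlehrer--Nirenberg steps, though technically involved, follow classical lines once the $C^1$ regularity of $\partial D$ is in hand, and parallel Williams's treatment of the Pompeiu problem.
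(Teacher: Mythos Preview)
Your overall strategy---contradiction, extension by zero, Caffarelli to pass from Lipschitz to $C^1$, then Kinderlehrer--Nirenberg to reach $C^{m+1,\mu}$---matches the paper's. However, two essential ingredients are missing, and without them the Caffarelli step does not go through.

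First, Caffarelli's theorem (as invoked here, Theorem~\ref{th2}) requires $w\in C^{1,1}(\overline{D}\cap B_R)$, not merely $W^{2,p}$ for all $p<\infty$. Calder\'on--Zygmund gives you $C^{1,\alpha}$ for every $\alpha<1$, but not second derivatives bounded up to the boundary. The paper obtains this via a direct analysis of the Newtonian volume potential (Lemmas~\ref{c1}--\ref{c2} and Proposition~\ref{regs}): one writes $u=-w_\psi$ with $\psi\in L^\infty(D)\cap C^\alpha(B_R(x_0))$ and shows that the symmetric jumps of all second derivatives across $\partial D$ are uniformly bounded, which, since $u$ vanishes outside $D$, yields the required $C^{1,1}$ bound from inside.

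Second, and more seriously, you have not verified the \emph{sign condition} $w\le 0$ in $D\cap B_r(x_0)$, which is an explicit hypothesis of Caffarelli's theorem. The fact that $\tilde u\equiv 0$ outside $D$ is not a substitute: one needs the solution to be of one sign \emph{inside} $D$ near $x_0$. The paper devotes Proposition~\ref{prep2} to this, following Williams: one shows that the second normal derivative $w_{\nu\nu}$ has a non-tangential limit with a definite sign at a.e.\ boundary point (using harmonic measure and Dahlberg's equivalence with surface measure), and then pushes this sign into the interior via a subharmonic comparison and a barrier from an exterior cone. This is precisely where the $C^{1,1}$ assumption on $n$ is used (so that $g_{i,j}\in L^\infty$), and where $(n(x_0)-1)v(x_0)\ne 0$ enters quantitatively. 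Your proposal names this as ``the hardest step'' but does not supply the argument; the ``non-degeneracy of $F(x_0)$'' alone does not force a sign on $u$.

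Two smaller points: the free-boundary theorems require a \emph{real-valued} solution, so one must pass to $w=\Re(u)$ (or $\Im(u)$) and use $(n(x_0)-1)\Re v(x_0)\ne 0$ or the imaginary analogue; and the Schauder step you insert is not needed (and as stated would require more boundary regularity than you have)---Caffarelli already delivers $w\in C^2(\overline{D}\cap B_{r'})$, after which Kinderlehrer--Nirenberg applies directly.
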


\noindent
\begin{remark}
The smoothness assumptions on the refractive index $n$ in Theorem \ref{scatnon1} and Theorem \ref{scatnon2} are only needed locally  in $\overline{D}\cap B_R(x_0)$ for some ball centered at $x_0$ of radius $R>0$.
\end{remark}

\noindent
Of course Theorems \ref{scatnon1} and \ref{scatnon2} only add insight if the wave number $k$ is a real transmission eigenvalue (which is a necessary condition for the incident field to produce a vanishing scattered field). At any  $k$ other than a transmission eigenvalue, every incident field is scattered by the given inhomogeneity. However, it is important to emphasize that we do not need to know a priori that $k>0$ is a transmission eigenvalue, and therefore our results hold under weaker conditions on the contrast than those (currently) needed to prove the existence  of real transmission eigenvalues. If $k>0$ is a transmission eigenvalue, the assumptions in Theorems \ref{scatnon1} and \ref{scatnon2} imply that the part $v$ of the transmission eigenfunction (\ref{te1})-(\ref{te2}) cannot be extended into the exterior of $D$ as solution of the Helmholtz equation, provided $n\neq 1$ on $\partial D$ and that this eigenfunction does not vanish at the point $x_0\in \partial D$ (see Corollary \ref{cor1}).

\noindent

\section{A free boundary regularity result}
With  $a(x) = k^2 n(x)$, and $b(x)= k^2(1-n(x))v(x)$, the problem (\ref{nsp}) becomes
\begin{eqnarray}
&\Delta u+a(x)u=b(x)&\;\;\;  \mbox{in}\;\; D \label{fbp}\\
&u=\displaystyle{\frac {\partial u}{\partial \nu}}=0 \qquad & \mbox{on}\;\; \partial D. \label{fbp1}
\end{eqnarray}
In order to prove our main results we shall make use of two classical free boundary regularity results.  The first result  is due to Kinderlehrer and Nirenberg  in  \cite[Theorem 1' on page 377]{pom000}. In \cite{pom000} the Theorem is proven for more general nonlinear second order elliptic partial differential operators, but in the following  we state it as it applies to our linear equation (\ref{fbp}). 

\begin{theorem}\label{th1} Suppose that $0\in \partial D$, and $\partial D\cap B_R(0)$ is of class  $C^1$ for some ball  $B_R(0)$ of radius $R$ centered at  $0$. Suppose $a$ and $b$ are real valued functions in $ C^{1}(\overline {D}\cap B_R(0))$, with $a(0)\neq 0$ and $b(0)\neq 0$. Furthermore suppose there exists a real valued solution $u$ to (\ref{fbp})-(\ref{fbp1}), with $u\in C^2({\overline D}\cap B_R(0))$. Then
\begin{enumerate}
\item $\partial D\cap B_{R'}(0)$ is of class $C^{1, \alpha}$ for every positive $\alpha<1$, and some $R'<R$.
\item  If additionally $a\in C^{m, \mu}(\overline {D}\cap B_R(0))$ and $b\in C^{m, \mu}(\overline {D}\cap B_R(0))$ for $m\geq 1$, $0<\mu<1$ then $\partial D\cap B_{R'}(0)$ is of class $C^{m+1, \mu}$, for some $R'<R$.
\item  If $a$ and  $b$ are real analytic in $\overline {D}\cap B_R(0)$ then $\partial D\cap B_{R'}(0)$ is real analytic for some $R'<R$.
\end{enumerate}
\end{theorem}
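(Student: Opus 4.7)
The plan is to reformulate the free boundary problem as a fixed boundary problem via a partial Legendre (hodograph) transform, and then bootstrap regularity by iterated application of standard elliptic estimates. First, since $\partial D$ is of class $C^1$ near $0$, I would rotate coordinates so that the inward unit normal at $0$ is $e_m$ and $\partial D$ is locally the graph $\{x_m = \varphi(x')\}$ with $\varphi(0) = 0$ and $\nabla\varphi(0) = 0$. The Dirichlet condition $u = 0$ on $\partial D$ annihilates all tangential derivatives of $u$ along $\partial D$, and the Neumann condition $\partial u/\partial\nu = 0$ then upgrades this to $\nabla u \equiv 0$ on $\partial D$. Substituting $u(0) = 0$ and $\partial_i\partial_j u(0) = 0$ for $(i,j)\neq (m,m)$ into (\ref{fbp}) forces $\partial_m^2 u(0) = b(0) \neq 0$, providing the non-degeneracy needed to invert a derivative of $u$.

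The next step is to introduce new variables $y_i = x_i$ for $i < m$, $y_m = \partial_m u(x)$, together with the Legendre dual $W(y) = x_m y_m - u(x(y))$. The Jacobian of $x\mapsto y$ at the origin equals $\partial_m^2 u(0) = b(0) \neq 0$, so this defines a $C^1$ diffeomorphism of a small neighborhood, and one checks that $W \in C^2$ with $\partial_{y_m}W = x_m$ and $\partial_{y_i}W = -\partial_i u$ for $i<m$. Under this transform $\partial D$ is mapped to the flat piece $\{y_m = 0\}$, the vanishing Cauchy data translate into $W(y',0) = 0$, and (\ref{fbp}) is rewritten as a fully nonlinear second order equation for $W$ on a half-ball $\{y_m > 0\}\cap B_{R'}(0)$. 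The ellipticity of $\Delta$ is preserved by the non-degenerate change of coordinates, so the principal part of the transformed equation is uniformly elliptic near the origin.

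The bootstrap then follows the classical route. When $a, b \in C^1$, the linearization of the $W$-equation has continuous coefficients and homogeneous Dirichlet data on a flat boundary, so boundary Schauder estimates yield $W \in C^{2,\alpha}$ for every $\alpha < 1$. Since the boundary is reconstructed as $\varphi(y') = \partial_{y_m} W(y',0)$, this gives $\varphi \in C^{1,\alpha}$, proving part 1. When $a, b \in C^{m,\mu}$, the coefficients of the transformed equation themselves inherit $C^{m,\mu}$ regularity, and an $m$-step Schauder iteration produces $W \in C^{m+2,\mu}$, whence $\varphi \in C^{m+1,\mu}$, proving part 2. In the analytic case I would invoke the Morrey--Nirenberg theorem on analyticity of solutions to analytic elliptic equations in a half-space, concluding that $W$, and therefore $\varphi$, is real analytic near $0$.

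The main obstacle is the careful bookkeeping around the transformed equation: one must verify uniform ellipticity in a genuine neighborhood (not merely at the single point $0$), track how the regularity of $a$ and $b$ is inherited by the nonlinear coefficients with the correct derivative counts, and, at the start of the iteration, convert the bare hypothesis $u \in C^2$ into enough initial regularity of $W$ to activate the Schauder machinery, typically via tangential difference quotients parallel to the flat boundary $\{y_m = 0\}$. The analytic conclusion is the most delicate step, since it requires majorant/Cauchy-type estimates tailored to boundary value problems rather than a purely interior Morrey argument, and this is where passing carefully between the $x$- and $y$-coordinates is essential.
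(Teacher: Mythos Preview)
The paper does not give its own proof of Theorem~\ref{th1}; it is quoted directly from Kinderlehrer and Nirenberg \cite{pom000} (their Theorem~1') and used as a black box in Section~\ref{proof}. Your outline is in fact a faithful sketch of the method employed in \cite{pom000}: the partial Legendre (hodograph) transform $y_m=\partial_m u$ flattens the free boundary, the non-degeneracy $\partial_m^2 u(0)=b(0)\neq 0$ makes this transform a local $C^1$ diffeomorphism, the two Cauchy conditions collapse to a single Dirichlet condition $W=0$ on $\{y_m=0\}$, and regularity is then bootstrapped by Schauder (resp.\ Morrey--Nirenberg) theory for the resulting quasilinear elliptic equation on a half-ball, with the original graph recovered via $\varphi(y')=\partial_{y_m}W(y',0)$. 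So there is nothing to compare: you have reconstructed the argument the paper cites rather than proves. One small remark: your sketch never uses the hypothesis $a(0)\neq 0$, and indeed for the specific linear equation \eqref{fbp} the essential non-degeneracy is $b(0)\neq 0$; the condition on $a$ is carried over from the more general nonlinear framework of \cite{pom000} and is harmless here.
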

\begin{remark}\label{rem1}
The regularity of the free boundary is a local property. Correspondingly, the result of Theorem \ref{th1} holds for $u$ solving  (\ref{fbp}) in $D \cap B_R(0)$ with zero Cauchy data (\ref{fbp1}) only on the part of boundary $\partial D\cap B_R(0)$. However, in our particular applications the solution $u$ will be defined on all of $D$. 
\end{remark}

\noindent
In this paper we initially assume that  $\partial D$ is only Lipschitz regular. In order to apply Theorem \ref{th1} we must first show that the free boundary $\partial D\cap B_R(0)$ is indeed $C^1$, and then verify that the solution $u$ to (\ref{fbp})-(\ref{fbp1}) is in $C^2({\overline D}\cap B_R(0))$.  This intermediate regularity is achieved with the help of a  classical result on regularity of the free boundary due to Caffarelli \cite[Section 1.2 and Theorem 3 on page 166]{pom00}, which we state in the following theorem, modified to the framework of our problem. This result refers to  a function $w$  that satisfies 
\begin{equation}\label{caf}
\Delta w=g  \quad \mbox{in } \, D\cap B_R(0), \qquad \mbox{such that $w=\displaystyle{\frac{\partial w}{\partial \nu}}=0 \quad \mbox{on } \, \partial D\cap B_R(0)$}~,
\end{equation}
where again  $0\in \partial D$ and  $B_R(0)$ is some ball of radius $R$ centered at  $0$. 
\begin{theorem}\label{th2} Suppose that $\partial D\cap B_R(0)$ is Lipschitz and the function $w$ satisfying (\ref{caf}) is in $C^{1,1}(\overline{D}\cap B_R(0))$. Furthermore, assume that  $w\leq 0$ in $D\cap B_R(0)$, and  $g$ has a $C^1$-extension $g^*$ in a neighborhood of  $\overline{D}\cap B_R(0)$  such that $g^*\leq -\alpha<0$. Then there exists $R'<R$ such that $\partial D\cap B_{R'}(0)$ is of class $C^1$ and all second derivatives of $w$ are continuous up to $\partial D\cap B_{R'}(0)$, {\it i.e.}, $w \in C^2(\overline{D}\cap B_{R'}(0))$.
\end{theorem}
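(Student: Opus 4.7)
The plan is to recast (\ref{caf}) as a classical obstacle problem and then apply the Caffarelli machinery. Setting $\tilde{w}=-w$, we have $\tilde{w}\geq 0$, $\Delta\tilde{w}=-g\geq\alpha>0$ in $D\cap B_R(0)$, and $\tilde{w}=\partial\tilde{w}/\partial\nu=0$ on $\partial D\cap B_R(0)$. Because $\tilde{w}$ and its gradient vanish on $\partial D\cap B_R(0)$ and $\tilde{w}\in C^{1,1}$, the extension $W$ of $\tilde{w}$ by zero to $B_R(0)\setminus\overline{D}$ lies in $C^{1,1}(B_R(0))$, is nonnegative, and satisfies $\Delta W=-g^{*}\chi_{\{W>0\}}$ distributionally (up to a null set). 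This is exactly the obstacle problem with obstacle $0$ and strictly positive forcing $-g^{*}\geq\alpha$ of class $C^{1}$.

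The first substantial step is to establish nondegeneracy at every free boundary point. For $y\in\partial D\cap B_{R/2}(0)$ and small $r>0$, I would compare $W$ with the radial quadratic $\tfrac{\alpha}{2m}|x-y|^{2}$ on the open set $\{W>0\}\cap B_r(y)$: the function $W(x)-\tfrac{\alpha}{2m}|x-y|^{2}$ is subharmonic there and vanishes at $y$, so the maximum principle forces $\sup_{\partial B_r(y)}W\geq\tfrac{\alpha}{2m}r^{2}$. Combined with the quadratic upper bound $W(x)\leq C|x-y|^{2}$ coming from $C^{1,1}$ regularity, this yields Caffarelli's uniform density estimates: there exists $\theta>0$ such that $|\{W>0\}\cap B_r(y)|\geq\theta r^{m}$ and $|\{W=0\}\cap B_r(y)|\geq\theta r^{m}$ for every free boundary point $y\in\partial D\cap B_{R/2}(0)$ and every sufficiently small $r$.

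With density control in hand, I would carry out a blow-up analysis. For a free boundary point $y$, set $W_r(x)=r^{-2}W(y+rx)$; the $C^{1,1}$ bound ensures subsequential $C^{1,\beta}_{\mathrm{loc}}$ convergence to a global nonnegative $C^{1,1}$ limit $W_{0}$ satisfying $\Delta W_{0}=-g^{*}(y)\chi_{\{W_{0}>0\}}$ with $W_{0}(0)=|\nabla W_{0}(0)|=0$. Caffarelli's dichotomy (via the Alt--Caffarelli--Friedman monotonicity formula, or via convexity of the coincidence set under strictly positive density) then forces $W_{0}$ to be either a half-space solution $\tfrac{-g^{*}(y)}{2}\bigl((x\cdot e)_{+}\bigr)^{2}$ or a nonnegative quadratic polynomial. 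The Lipschitz structure of $\partial D$ at $y$ rules out the polynomial alternative, since the blow-up of the coincidence set is trapped in a Lipschitz cone on one side and hence cannot be thin. Continuity of the resulting normal $e(y)$ in $y$, together with the density estimates, then upgrades this pointwise information to a $C^{1}$ graph representation of $\partial D\cap B_{R'}(0)$ for some $R'<R$. Finally, with the free boundary now $C^{1}$ and $g\in C^{1}$, classical up-to-the-boundary Schauder estimates for the Dirichlet problem (\ref{caf}), together with the overdetermined Neumann data $\partial w/\partial\nu=0$, produce continuity of every second derivative of $w$ up to $\partial D\cap B_{R'}(0)$. The hardest step is the blow-up classification: eliminating the degenerate polynomial alternative must be done using only the one-sided Lipschitz hypothesis on $\partial D$, which is significantly weaker than the $C^{1}$ regularity ultimately being proved, and propagating the density lower bound through the blow-up to rule out thin coincidence sets is where Caffarelli's geometric insight is decisive.
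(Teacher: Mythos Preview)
The paper does not prove Theorem~\ref{th2}; it is quoted as a classical free boundary regularity result of Caffarelli, with the citation \cite[Section~1.2 and Theorem~3 on page~166]{pom00}, and is used as a black box in Section~\ref{proof}. Your sketch is therefore not comparable to a ``paper's own proof,'' but it is precisely an outline of the argument in the cited Caffarelli paper (obstacle problem reformulation, quadratic nondegeneracy, density estimates, blow-up classification, and finally Schauder theory once the boundary is $C^{1}$), so in spirit it is exactly what the paper is invoking.

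One point deserves tightening. After extending $\tilde w=-w$ by zero you assert that $\Delta W=-g^{*}\chi_{\{W>0\}}$. What the construction actually gives is $\Delta W=-g^{*}\chi_{D\cap B_R(0)}$ in the sense of distributions; the two coincide only if $\{W>0\}=D\cap B_R(0)$ up to a null set, i.e., only if $\tilde w>0$ throughout the interior of $D$ near the origin. More importantly, the free boundary that Caffarelli's blow-up machinery regularizes is $\partial\{W>0\}$, whereas the theorem asserts regularity of $\partial D$. In Caffarelli's formulation the relevant structural hypothesis is a positive Lebesgue density of the coincidence set at the free boundary point, and that is supplied here directly by the Lipschitz assumption on $\partial D$ (the complement $B_R(0)\setminus D$ is a Lipschitz subgraph, hence has uniform positive density at each point of $\partial D\cap B_R(0)$). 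This is the correct way the Lipschitz hypothesis enters \emph{before} the blow-up step, and it is also what rules out the polynomial (thin coincidence set) alternative in the blow-up classification, as you note. Making this identification explicit---working with the right-hand side $-g^{*}\chi_{D}$ and invoking the density of $B_R(0)\setminus D$, rather than passing through $\chi_{\{W>0\}}$---would close the only real gap in your outline.
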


\noindent
The first obstacle to the application of Theorem \ref{th2} is to verify that the $H^2_0(D)$ solution $u$ to (\ref{nsp})  has all second derivatives  uniformly bounded in $D\cap B_R(x_0)$.  For this purpose, we next investigate the regularity of the volume potential.

\section{A regularity result for the volume potential}\label{secreg}
Let $\Phi(x,y)$ be the free space fundamental solution to the Laplace operator,  given by
\begin{equation}\label{fund}
\Phi(x,y):=\left\{\begin{array}{rrcll}\displaystyle{\frac{1}{4\pi |x-y|}} \quad \; & \qquad  \mbox{in }\, {\mathbb R}^3 \\
& \\
\displaystyle{\frac{1}{2 \pi}}\ln \displaystyle{\frac{1}{|x-y|}} & \qquad \mbox{in }\, {\mathbb R}^2 ~.
\end{array}\right.
\end{equation}
For later use we note the following estimates:
$$\hbox{for } 1\le j \le m~~ \left|\frac{\partial \Phi}{\partial x_j}(x,y)\right|\leq \frac{C}{|x-y|^{(m-1)}} ~~~~ \hbox{ in } \mathbb{R}^m~, m=2,3~,$$
$$\hbox{for } 1\le i,j \le m~~\left|\frac{\partial^2 \Phi}{\partial x_i\partial x_j}(x,y)\right|\leq \frac{C}{|x-y|^m}~~~ \hbox{ in } \mathbb{R}^m~, m=2,3~.$$

\noindent 
In this section we study the regularity of the (weighted) volume potential
$$w_\psi(x)=\int_D\psi(y)\Phi(x,y)\,dy~.$$
The following lemma is proven in  \cite[Lemma 3.7]{kirsch} in  the case of ${\mathbb R}^3$. For completeness we include here the proof in ${\mathbb R}^m$, $m=2,3$.
\begin{lemma}\label{c1}
For $\psi\in L^\infty(D)$ we have that $w_\psi\in C^1({\mathbb R}^m)$, $m=2,3$ and
$$\frac{\partial w_\psi}{\partial x_j}(x)=\int_D \psi(y)\frac{\partial \Phi}{\partial x_j}(x,y)\,dy, \qquad x\in {\mathbb R}^m, \; j=1,\cdots m~.$$
\end{lemma}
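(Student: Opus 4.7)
The plan is to justify differentiation under the integral sign via a standard truncation/approximation argument adapted to the weak singularities of $\Phi$ and $\nabla_x \Phi$ in dimensions $m=2,3$. Note that $|\Phi(x,y)| \le C|x-y|^{-(m-2)}$ (with the logarithmic case in $m=2$) and $|\nabla_x \Phi(x,y)| \le C|x-y|^{-(m-1)}$, and since $m-1 < m$, both are locally integrable on $\mathbb{R}^m$. Hence the integrals defining $w_\psi(x)$ and the candidate
$$V_j(x) := \int_D \psi(y)\,\frac{\partial \Phi}{\partial x_j}(x,y)\,dy$$
are absolutely convergent for every $x\in\mathbb{R}^m$.

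First I would introduce a smooth cutoff: let $\eta \in C^\infty(\mathbb{R})$ with $0\le \eta \le 1$, $\eta(t)=0$ for $t\le 1$, $\eta(t)=1$ for $t\ge 2$, and set $\eta_\varepsilon(y;x) := \eta(|x-y|/\varepsilon)$. Define
$$w_\psi^\varepsilon(x) := \int_D \psi(y)\,\Phi(x,y)\,\eta_\varepsilon(y;x)\,dy.$$
The integrand of $w_\psi^\varepsilon$ is smooth in $x$ (uniformly in $y$ on the support), so $w_\psi^\varepsilon \in C^1(\mathbb{R}^m)$ and one may differentiate under the integral sign, producing a gradient consisting of the truncated integral $\int_D \psi(y)\,\partial_{x_j}\Phi(x,y)\,\eta_\varepsilon\,dy$ plus a term coming from the derivative of $\eta_\varepsilon$, supported in $\{\varepsilon \le |x-y| \le 2\varepsilon\}$. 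Next I would show that both $w_\psi^\varepsilon \to w_\psi$ and $\partial_{x_j} w_\psi^\varepsilon \to V_j$ uniformly on compact subsets of $\mathbb{R}^m$ as $\varepsilon\to 0$. For the first, the difference is bounded by $\|\psi\|_{L^\infty}\int_{|x-y|<2\varepsilon} |\Phi(x,y)|\,dy$, which tends to $0$ uniformly in $x$ by computing in polar/spherical coordinates. The same type of estimate, using $|\nabla_x \Phi| \le C|x-y|^{-(m-1)}$ and $|\nabla \eta_\varepsilon| \le C/\varepsilon$, gives
$$\bigl|\partial_{x_j} w_\psi^\varepsilon(x) - V_j(x)\bigr| \le C\|\psi\|_{L^\infty}\int_{|x-y|<2\varepsilon} \frac{dy}{|x-y|^{m-1}} \le C'\,\varepsilon,$$
again uniformly in $x$ on compact sets.

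Once these two uniform convergences are in hand, a standard result on the interchange of limits and derivatives implies that $w_\psi \in C^1(\mathbb{R}^m)$ and $\partial_{x_j} w_\psi = V_j$, which is exactly the asserted formula. It remains to observe that $V_j$ itself is continuous on $\mathbb{R}^m$; this follows by splitting $V_j(x) = \int_{|x-y|<\delta} + \int_{|x-y|\ge \delta}$, noting the first piece is $O(\delta)$ uniformly in $x$ (by the same weak-singularity estimate) and the second piece is continuous in $x$ by uniform continuity of $\partial_{x_j}\Phi$ on $\{|x-y|\ge \delta\}$ combined with dominated convergence.

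The only slightly delicate point — effectively the main obstacle — is handling the contribution of $\nabla \eta_\varepsilon$ in $\partial_{x_j} w_\psi^\varepsilon$, since $\nabla \eta_\varepsilon = O(1/\varepsilon)$ is not itself small. This is absorbed by the fact that it is multiplied by $\Phi = O(|x-y|^{-(m-2)})$ and integrated over an annulus of thickness $\varepsilon$ and radius $\varepsilon$, giving an overall bound of order $\varepsilon$ (or $\varepsilon|\log \varepsilon|$ in $m=2$), which tends to $0$. Since the argument is purely local in $x$ and uses only $\psi\in L^\infty(D)$ together with $\mathrm{supp}\,\psi \subset D$ bounded, the conclusion holds on all of $\mathbb{R}^m$ without any regularity assumption on $\partial D$.
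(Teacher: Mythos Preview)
Your argument is correct and is essentially identical to the paper's own proof: the paper also introduces a smooth radial cutoff $\xi(|x-y|/\epsilon)$, defines the truncated potential $w_\epsilon$, and shows both $w_\epsilon\to w_\psi$ and $\partial_{x_j}w_\epsilon\to d_j$ uniformly by estimating the integrals over $\{|x-y|\le 2\epsilon\}$ exactly as you do (obtaining $O(\epsilon)$ in $\mathbb{R}^3$ and $O(\epsilon\ln(1/\epsilon))$ in $\mathbb{R}^2$). Your additional remark verifying continuity of $V_j$ directly is not in the paper but is harmless.
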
 
\begin{proof}
It easy to see that $w_\psi$ is in $C^0(\mathbb{R}^m)$. Now, consider a smooth cut-off function $\xi$ such that  $0\leq \xi(t)\leq 1$, $\xi(t)=1$ for $t\geq 2$ and $\xi(t)=0$ for $t\leq 1$ and set
$$d_j(x):= \int_D \psi(y)\frac{\partial \Phi}{\partial x_j}(x,y)\,dy~,$$ 
which exists from the above estimates of the derivatives  of the fundamental solutions.
Next, let us denote by 
$$w_\epsilon(x):=\int_D \psi(y)\xi(|x-y|/\epsilon)\Phi(x,y)\,dy~;$$
notice that $w_\epsilon \in C^\infty({\mathbb R}^m)$ and that $w_\epsilon \rightarrow w_\psi$ in $C^0(\mathbb{R}^m)$ as $\epsilon \rightarrow 0$. We have 
$$d_j(x)-\frac{\partial w_\epsilon}{\partial x_j}(x)=\int_D \psi(y)\frac{\partial}{\partial x_j}\left\{\Phi(x,y)\left[1-\xi(|x-y|/\epsilon)\right]\right\}\,dy~,$$
and so in ${\mathbb R}^3$ we estimate
\begin{eqnarray*}
\left|d_j(x)-\frac{\partial w_\epsilon}{\partial x_j}(x)\right|&\leq& \|\psi\|_\infty\int_{|y-x|\leq 2\epsilon}\left(\left|\frac{\partial \Phi}{\partial x_j}(x,y)\right|+\frac{\|\xi'\|_\infty}{\epsilon}|\Phi(x,y)|\right)\,dy\\
&\leq&C\int_0^{2\epsilon}\left(\frac{1}{r^2}+\frac{1}{\epsilon r}\right)r^2 \,dr=C_1\epsilon~,
\end{eqnarray*}
whereas in ${\mathbb R}^2$
\begin{eqnarray*}
\left|d_j(x)-\frac{\partial w_\epsilon}{\partial x_j}(x)\right|&\leq& \|\psi\|_\infty\int_{|y-x|\leq 2\epsilon}\left(\left|\frac{\partial \Phi}{\partial x_j}(x,y)\right|+\frac{\|\xi'\|_\infty}{\epsilon}|\Phi(x,y)|\right)\,dy\\
&\leq&C\int_0^{2\epsilon}\left(\frac{1}{r}+\frac{1}{\epsilon}\ln\frac{1}{r} \right)r \,dr\leq C_2\epsilon \ln \frac{1}{\epsilon}~. \end{eqnarray*}
In both cases $\frac{\partial w_\epsilon}{\partial x_j}\to d_j$ uniformly in ${\mathbb R}^m$ as $\epsilon \to 0$, which shows that $w_\psi\in C^1({\mathbb R}^m)$ and  that $\displaystyle{\frac{\partial w_\psi}{\partial x_j}}=d_j$. This completes the proof.
\end{proof}

\noindent
All second derivatives of $w_\psi$  exist for $x\in {\mathbb R}^m\setminus \overline{D}$, and one can differentiate twice inside the integral to obtain
\begin{eqnarray}
\frac{\partial^2 w}{\partial x_i \partial x_j}(x)&=&\int_D\psi(y)\frac{\partial^2\Phi}{\partial x_i \partial x_j}(x,y)\,dy \nonumber \\
&=&\int_D\left[\psi(y)-\psi(x)\right]\frac{\partial^2\Phi }{\partial x_i \partial x_j}(x,y)\,dy +\psi(x)\int_D\frac{\partial^2\Phi}{\partial x_i \partial x_j}(x,y)\,dy \nonumber\\ 
&=&\int_D\left[\psi(y)-\psi(x)\right]\frac{\partial^2\Phi}{\partial x_i \partial x_j}(x,y)\,dy -\psi(x)\int_{\partial D}\frac{\partial \Phi}{\partial x_j}(x,y) \nu_i(y)\,dy~, \label{dder}
\end{eqnarray}
provided $\psi$ extends into $\mathbb{R}^m\setminus \overline{D}$.
Here the last integral over $\partial D$ is obtained by using the divergence theorem (the minus sign arises when one replaces an $x_i$ derivative with a $y_i$ derivative).  Note that unit normal vector $\nu=\left(\nu_i\right)_{i=1,m}$ is well-defined for almost all $y\in \partial D$. We  show next that if $\psi 
$, in addition to being bounded on $D$, is in $C^{\alpha}( B_R(0))$, then (\ref{dder}) holds true for $x\in  D\cap B_R(0)$. To this end,  we set
$$d_{ij}(x):=\int_D\left[\psi(y)-\psi(x)\right]\frac{\partial^2\Phi}{\partial x_i \partial x_j}(x,y)\,dy -\psi(x)\int_{\partial D}\frac{\partial \Phi}{\partial x_j}(x,y) \nu_i(y)\,dy~.$$ Note that $d_{ij}(x)$ is well defined for $x\in D\cap B_R(0)$, since for $\psi \in C^{\alpha}(B_R(0))$  the integrand inside the volume integral behaves as
\begin{equation}\label{gr}
\left|\left[\psi(y)-\psi(x)\right]\frac{\partial^2\Phi}{\partial x_i \partial x_j}(x,y)\right|\leq C|x-y|^{\alpha-m}~,~ \hbox{ for } y \hbox{ near } x,
\end{equation}
and is bounded for $y$ away from $x$;
the surface integral exists since $x$ is not on  $\partial D$.  Now we  choose $2\epsilon<dist(x, \partial D)$ and  again consider a smooth cut-off function $\xi$ such that  $0\leq \xi(t)\leq 1$, $\xi(t)=1$ for $t\geq 2$ and $\xi(t)=0$ for $t\leq 1$. Set
$$d_{j,\epsilon}(x):=\int_D \psi(y)\xi(|x-y|/\epsilon)\frac{\partial \Phi}{\partial x_j}(x,y)\,dy~.$$
We obtain
$$\frac{\partial d_{j,\epsilon}}{\partial x_i}(x)=\int_D\left[\psi(y)-\psi(x)\right]\frac{\partial }{\partial x_i}\left(\xi(|x-y|/\epsilon)\frac{\partial \Phi}{\partial x_j}(x,y)\right)dy -\psi(x)\int_{\partial D}\frac{\partial \Phi}{\partial x_j}(x,y) \nu_i(y)\,dy~,$$
and therefore
\begin{eqnarray*}
\left|d_{ij}(x)-\frac{\partial d_{j,\epsilon}}{\partial x_i}(x)\right|&\leq& C\int_{|y-x|\leq 2\epsilon}\left(\frac{1}{|y-x|^m}+\frac{\|\xi'\|_\infty}{\epsilon|y-x|^{m-1}}\right)|y-x|^\alpha dy\\
&=&C\int_0^{2\epsilon}\left(\frac{1}{r^{1-\alpha}}+\frac{\|\xi'\|_\infty}{\epsilon} r^\alpha \right) \,dr\leq C\epsilon^\alpha~.
\end{eqnarray*}
Hence, as $\epsilon \to 0$, $d_{j,\epsilon}(x)$ converges to $\frac{\partial w_\psi}{\partial x_j}(x)$, and  $\displaystyle{\frac{\partial d_{j,\epsilon}}{\partial x_i}(x)}$ converges to  $d_{ij}(x)$, both uniformly on compact subsets of $D \cap B_R(0)$. Thus  $d_{ij}(x)=\displaystyle{\frac{\partial^2 w_\psi}{\partial x_i \partial x_j}(x)}$  for $x\in B_{R}(0)\cap D$.

\noindent
Even for smooth $\psi$,  but with $\psi\neq 0$ on $\partial D$, the second derivatives of $w_\psi$ maybe become unbounded as $x$ approaches a boundary point from either inside or outside $D$.  Thus the volume potential is not necessarily in $C^2(\overline{D}\cap B_R(0))$. However, we can show that symmetric jumps of the second derivative (to become precise later) are uniformly bounded near  $0\in \partial D$,   when $\psi \in C^{\alpha}(B_R(0))$ for $0<\alpha<1$.  A similar result is  proven in \cite[Theorem 2]{pom3} for $\psi\equiv 1$. Our method of proof of Lemma \ref{c2} below is in many ways very similar to that in \cite{pom3}. We provide the details for completeness.

\noindent
First we introduce some notations. Denote $x:=(x^{(m-1)}, x_m)\in {\mathbb R^{m}}$ where $x^{(m-1)}\in {\mathbb R}^{m-1}$, and consider a cylindrical neighborhood  of $0$ defined by $N:=N(\rho,h)=B_\rho^{(m-1)}(0)\times [-h, h]$, where  $B_\rho^{(m-1)}(0)$ is the $m-1$ dimensional ball of radius $\rho$ centered at the origin. We assume that $B_{2r}(0)\subset N\subset \overline{N}\subset B_R(0)$. Furthermore, we assume (by appropriate rotation and selection of $\rho$ and $h$) that $N\cap \partial D$ is the graph $x_m=f(x^{(m-1)})$ of a Lipshitz continuous function $f:B_\rho^{(m-1)}(0)\to {\mathbb R}$, with Lipschitz constant $K$. We also assume that $h>K\rho$ and 
$$
N\cap D=\left\{(x^{(m-1)}, x_m)~:~ \, x^{(m-1)}\in B_\rho^{(m-1)}(0), \, f(x^{(m-1)})<x_m<h\right\},
$$
Finally we denote by  $e_m$ the unit vector in the  $m$-direction. We can now prove the following lemma.
\begin{lemma}\label{c2}
Assume that $\psi \in C^{\alpha}(B_R(0))$, for $0<\alpha<1$, in addition to being bounded on $D$. Then there exist $0<r$ so that the symmetric jumps 
$$\frac{\partial^2 w_\psi}{\partial x_i \partial x_j}(x+\eta e_m)-\frac{\partial^2 w_\psi}{\partial x_i \partial x_j}(x-\eta e_m), \qquad 1\leq i,j\leq m$$
across  the boundary at $x$ are uniformly bounded with respect to $0<\eta\leq r$ and $x\in \partial D\cap B_r(0)$.
\end{lemma}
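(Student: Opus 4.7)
The plan is to adapt the argument of Williams \cite[Theorem 2]{pom3} (which treats the case $\psi\equiv 1$) to the H\"older-weighted potential at hand. Set $x^\pm:=x\pm\eta e_m$ for $x\in\partial D\cap B_r(0)$ and $0<\eta\le r$, with $r$ chosen so that $B_{2r}(0)\subset N$. A first observation is that the representation (\ref{dder}) derived above for $x^+\in D$ holds verbatim for $x^-\notin\overline{D}$ as well: since $\psi\in C^\alpha(B_R(0))$ is defined throughout $N$, the decomposition $\psi(y)=[\psi(y)-\psi(x^-)]+\psi(x^-)$ is legitimate, and the divergence theorem applies directly to $\int_D\partial_i\partial_j\Phi(x^-,y)\,dy$ because $\Phi(x^-,\cdot)$ is smooth on $\overline D$. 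Subtracting the two representations gives $\Delta_\eta=I_V+I_S$, where $I_V$ is the volume-integral difference and $I_S$ is the boundary-integral difference.

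For $I_V$ I split the integration region at $|y-x|=2\eta$. On $\{|y-x|<2\eta\}$ each summand is treated separately, bounding $|\psi(y)-\psi(x^\pm)|\le C|y-x^\pm|^\alpha$ against $|\partial_i\partial_j\Phi(x^\pm,y)|\le C|y-x^\pm|^{-m}$ and using polar coordinates to get $O(\eta^\alpha)$ per summand. On $\{|y-x|\ge 2\eta\}$, adding and subtracting $\psi(x)$ inside both integrands rewrites their difference as
$$[\psi(y)-\psi(x)]\bigl[\partial_i\partial_j\Phi(x^+,y)-\partial_i\partial_j\Phi(x^-,y)\bigr]+[\psi(x)-\psi(x^+)]\partial_i\partial_j\Phi(x^+,y)-[\psi(x)-\psi(x^-)]\partial_i\partial_j\Phi(x^-,y).$$
The first piece is controlled via $|\psi(y)-\psi(x)|\le C|y-x|^\alpha$ and the mean value estimate $|\partial_i\partial_j\Phi(x^+,y)-\partial_i\partial_j\Phi(x^-,y)|\le C\eta|y-x|^{-m-1}$, valid on $|y-x|\ge 2\eta$, producing a bound of order $\eta^\alpha$. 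The last two pieces carry the H\"older factor $|\psi(x)-\psi(x^\pm)|\le C\eta^\alpha$ and, after invoking the divergence identity $\int_D\partial_i\partial_j\Phi(x^\pm,y)dy=-\int_{\partial D}\partial_j\Phi(x^\pm,y)\nu_i(y)dy$, combine naturally with $I_S$.

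For the surface piece $I_S$ I first replace $\psi(x^\pm)$ by $\psi(x)$ up to an error $O(\eta^\alpha)$ multiplied by the (at worst logarithmically divergent) weakly singular integral $\int_{\partial D}|\partial_j\Phi(x^\pm,y)|\,dy$; this product is still bounded. What remains is
$$J_\eta:=\psi(x)\int_{\partial D}\Bigl[\frac{\partial\Phi}{\partial x_j}(x^-,y)-\frac{\partial\Phi}{\partial x_j}(x^+,y)\Bigr]\nu_i(y)\,dy,$$
which must be bounded uniformly in $x$ and $\eta$. The integral over $\partial D\setminus N$ poses no difficulty since the integrand is smooth there. For the local piece $\partial D\cap N$ I parametrize by the graph $y=(y^{(m-1)},f(y^{(m-1)}))$ and change variables to $z^{(m-1)}=y^{(m-1)}-x^{(m-1)}$, so that the two arguments of $\partial_j\Phi$ become $z\mp\eta e_m$ with $z=(z^{(m-1)},f(y^{(m-1)})-x_m)$. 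The resulting integrand is $\eta$-odd, so a symmetrized principal part cancels and only a Lipschitz-controlled remainder survives, which is integrable uniformly in the Lipschitz constant $K$.

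The main obstacle is precisely this last cancellation on the Lipschitz graph. Unlike the smooth-boundary case, where classical jump relations for the volume potential give an exact identity, here the kernel $\partial_j\Phi(x^+,y)-\partial_j\Phi(x^-,y)$ is only marginally integrable against the surface measure and the unit normal $\nu(y)$ varies discontinuously. The cancellation has to be extracted by hand, by separating the difference kernel into an $\eta$-odd principal part (whose integral vanishes after symmetrization in $z^{(m-1)}$) and a lower-order remainder bounded in terms of $K$. This is exactly the computation carried out in \cite{pom3} for $\psi\equiv 1$; our setting only introduces the bounded H\"older factor $\psi(x)$, which factors out harmlessly. Assembling the pieces yields the claimed uniform bound on $\Delta_\eta$ for all $0<\eta\le r$ and $x\in\partial D\cap B_r(0)$.
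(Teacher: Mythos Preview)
Your overall architecture is the same as the paper's: use the representation (\ref{dder}) at both $x^\pm$, separate volume and surface contributions, control the H\"older-weighted pieces directly, and reduce the remaining surface piece to the Williams computation. Two differences and one inaccuracy are worth pointing out.

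\textbf{Volume term.} The paper does not split near/far or invoke a mean-value estimate. It simply observes that each of the two volume integrals over $D\cap B_R(0)$ is \emph{individually} uniformly bounded, since $\bigl|[\psi(y)-\psi(x^\pm)]\partial_i\partial_j\Phi(x^\pm,y)\bigr|\le C|y-x^\pm|^{\alpha-m}$ is integrable on a bounded region. Your more elaborate argument (yielding $O(\eta^\alpha)$) is correct but unnecessary; also note that when you invoke the divergence identity for the last two pieces you are working on $D\cap\{|y-x|\ge 2\eta\}$, so an extra sphere boundary appears --- harmless, but it should be mentioned.

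\textbf{Surface decomposition.} The paper adds and subtracts $\psi(y)$ rather than $\psi(x)$, obtaining $\mathbb{I}_1+\mathbb{I}_2+\mathbb{I}_3$ with $\mathbb{I}_{1,2}$ carrying the H\"older factor $[\psi(x^\pm)-\psi(y)]$ (bounded exactly as you do), and $\mathbb{I}_3=\int_{\partial D}[\partial_j\Phi(x^+,y)-\partial_j\Phi(x^-,y)]\,\psi(y)\nu_i(y)\,ds_y$. Your version has $\psi(x)$ outside instead of $\psi(y)$ inside; since only the $L^\infty$ bound on $\psi$ is used for this piece, both routes work. Your intermediate step $O(\eta^\alpha)\cdot O(\log(1/\eta))$ is also fine.

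\textbf{The mechanism for $J_\eta$.} Here your description is misleading. The phrase ``the integrand is $\eta$-odd, so a symmetrized principal part cancels after symmetrization in $z^{(m-1)}$'' does not reflect what actually happens. For the $j=m$ component on a flat graph the difference kernel is \emph{even} in $z^{(m-1)}$, so no cancellation by symmetrization occurs; for $j<m$ on a flat graph the difference kernel vanishes identically by reflection, not by $z^{(m-1)}$-symmetrization. The genuine mechanism (in \cite{pom3} and in the paper's Appendix) is a \emph{scaling} argument: one bounds the \emph{absolute value} of the difference kernel, substitutes $y^{(m-1)}=\eta u^{(m-1)}$, writes $f(y^{(m-1)})=|y^{(m-1)}|g(y^{(m-1)})$ with $|g|\le K$, and shows the resulting integrand is dominated by $C(K)(|u^{(m-1)}|^2+1)^{-m/2}$ uniformly in $\eta$. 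Your deferral to \cite{pom3} is correct and saves the argument, but the heuristic you offer for it is not.
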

\begin{proof} Using (\ref{dder}), outside and inside $D$, we write 
\begin{eqnarray*}
&&\frac{\partial^2 w_{\psi}}{\partial x_i \partial x_j}(x+\eta e_m)-\frac{\partial^2 w_{\psi}}{\partial x_i \partial x_j}(x-\eta e_m)\nonumber \\
&&\qquad =\int_D\left[\psi(y)-\psi(x+\eta e_m)\right]\frac{\partial^2\Phi}{\partial x_i \partial x_j}(x+\eta e_m,y)\,dy\nonumber \\
&&\qquad -\int_D\left[\psi(y)-\psi(x-\eta e_m)\right]\frac{\partial^2\Phi}{\partial x_i \partial x_j}(x-\eta e_m,y)\,dy \nonumber \\
&&\qquad -\psi(x+\eta e_m)\int_{\partial D}\frac{\partial \Phi}{\partial x_j}(x+\eta e_m,y) \nu_i(y)\,ds_y+\psi(x-\eta e_m)\int_{\partial D}\frac{\partial \Phi}{\partial x_j}(x-\eta e_m,y) \nu_i(y)\,ds_y
\end{eqnarray*}
for $x\in \partial D\cap B_r(0)$.  In the above integral expressions the part of the integrals taken over $D\setminus B_R(0)$ and $\partial D\setminus B_R(0)$ are uniformly bounded with respect to $\eta$  in $[0,r]$ and for all $x\in \partial D\cap B_r(0)$. So it suffices to consider only the integrals over $B_R(0)\cap D$  and $B_R(0)\cap \partial D$.  Next we have the following estimates for the integrands of the volume integrals
$$\left|\left[\psi(y)-\psi(x\pm\eta e_m)\right]\frac{\partial^2\Phi}{\partial x_i \partial x_j}(x\pm\eta e_m,y)\right|\leq C|x\pm\eta e_m-y|^{\alpha-m}~,~~~y \in B_R(0)$$
for $x\in \partial D\cap B_r(0)$ and $\eta<r$ (note that $x\pm\eta e_m\in B_{2r}(0)\subset B_R(0)$). Therefore the integrals over $D\cap B_R(0)$ are bounded uniformly in $\eta\in [0, r]$ and $x\in \partial D\cap B_r(0)$.  Next we consider the boundary integral  terms
$$\psi(x+\eta e_m)\int_{\partial D\cap B_R(0)}\frac{\partial \Phi}{\partial x_j}(x+\eta e_m,y) \nu_i(y)\,ds_y - \psi(x-\eta e_m)\int_{\partial D\cap B_R(0)}\frac{\partial \Phi}{\partial x_j}(x-\eta e_m,y) \nu_i(y)\,ds_y$$
for $1\leq i,j\leq m$. The above expression can be written  as
$${\mathbb I}_1+{\mathbb I}_2+{\mathbb I}_3$$
where
$${\mathbb I}_1:=\int_{\partial D\cap B_R(0)}\left[\psi(x+\eta e_m)- \psi(y)\right]\frac{\partial \Phi}{\partial x_j}(x+\eta e_m,y) \nu_i(y)\,ds_y$$
$${\mathbb I}_2:=\int_{\partial D\cap B_R(0)}\left[\psi(y)- \psi(x-\eta e_m)\right]\frac{\partial \Phi}{\partial x_j}(x-\eta e_m,y) \nu_i(y)\,ds_y$$
and 
$${\mathbb I}_3:=\int_{\partial D\cap B_R(0)}\left[\frac{\partial \Phi}{\partial x_j}(x+\eta e_m,y)-\frac{\partial \Phi}{\partial x_j}(x-\eta e_m,y) \right]\psi(y)\nu_i(y)\,ds_y.$$
Using the fact that  $\psi \in C^{\alpha}(B_R(0))$, and that $\tilde x:=x\pm\eta e_m\in B_{2r}(0)\subset N$  for $x\in \partial D\cap B_r(0)$ and $\eta<r$, we obtain
\begin{eqnarray*}
\left|{\mathbb I}_{1,2}\right|&\leq &C_1\int_{\partial D\cap B_R(0)}\frac{1}{|(x\pm\eta e_m)-y|^{m-1-\alpha}}\,ds_y=C+C_1\int_{\partial D\cap N}\frac{1}{|\tilde x-y|^{m-1-\alpha}}\,ds_y\\
& \leq &C+C_2\int_{B^{(m-1)}_\rho(0)}\frac{1}{|\tilde x^{(m-1)}-y^{(m-1)}|^{m-1-\alpha}}\sqrt{1+|\nabla f(y^{(m-1)}|^2}\,dy^{(m-1)}\\
&\leq & C+ C_3\int_{B^{(m-1)}_\rho(0)}\frac{1}{|\tilde x^{(m-1)}-y^{(m-1)}|^{m-1-\alpha}}\,dy^{(m-1)} \qquad   m=2,3~.
\end{eqnarray*}
Note that by Rademacher's theorem $\nabla f(y^{(m-1)})$ is well defined and is bounded at all points in $y^{(m-1)}\in B^{(m-1)}_\rho(0)$ except for a subset of Lebesgue measure zero.  Hence ${\mathbb I}_{1,2}$ are also bounded uniformly in $\eta\in [0, r]$ and $x\in \partial D\cap B_r(0)$. To  prove our lemma it thus suffices to estimate the  term ${\mathbb I}_3$, with the symmetric jumps. We provide the details of this estimation for $x=0$. For $x$ near $0$ (i.e., in $\partial D \cap B_r(0)$) the same approach works with obvious modifications. Since
$$\frac{\partial \Phi(x,y)}{\partial x_j}=\frac{-(x_j-y_j)}{\omega_m|x-y|^m}, \qquad m=2,3, \quad j=1\cdots m, \quad \omega_2=2\pi, \, \omega_3=4\pi~,$$ 
the integrals we need to study take the form
$$\int\limits_{B^{(m-1)}_\rho(0)}\left[ \frac{y_j}{\left(|y^{(m-1)}|^2+(f(y^{(m-1)})- \eta)^2\right)^{m/2}}-\frac{y_j}{\left(|y^{(m-1)}|^2+(f(y^{(m-1)})+\eta)^2\right)^{m/2}}\right] F(y^{(m-1)})dy^{(m-1)}$$ 
 for $j=1, \cdots, (m-1)$  and 
$$\int\limits_{B^{(m-1)}_\rho(0)}\left[\frac{(f(y^{(m-1)})- \eta)}{\left(|y^{(m-1)}|^2+(f(y^{(m-1)})- \eta)^2\right)^{m/2}}-\frac{(f(y^{(m-1)})+\eta)}{\left(|y^{(m-1)}|^2+(f(y^{(m-1)})+\eta)^2\right)^{m/2}}\right]F(y^{(m-1)})dy^{(m-1)},$$
for $j=m$. Here 
$$F(y^{(m-1)}):=\sqrt{1+|\nabla f(y^{(m-1))}|^2}\, \psi(y^{(m-1)}, f(y^{(m-1)}))\, \nu_i(y^{(m-1)}, f(y^{(m-1)})$$
is a function in $L^\infty(B^{(m-1)}_\rho(0))$, and hence there is a $C>0$ such that $|F(y^{(m-1)})|\leq C$ for almost all $y^{(m-1)}\in B^{(m-1)}_\rho(0)$. In order to estimate the above integrals, it therefore suffices to estimate  
\begin{equation}\label{ji}
\hskip -5pt
\int\limits_{B^{(m-1)}_\rho(0)}\left| \frac{y_j}{\left(|y^{(m-1)}|^2+(f(y^{(m-1)})- \eta)^2\right)^{m/2}}-\frac{y_j}{\left(|y^{(m-1)}|^2+(f(y^{(m-1)})+\eta)^2\right)^{m/2}}\right| dy^{(m-1)}
\end{equation}
for $j=1, \cdots, (m-1)$, and 
\begin{equation}\label{jm}
\hskip -5pt
\int\limits_{B^{(m-1)}_\rho(0)} \left| \frac{(f(y^{(m-1)})- \eta)}{\left(|y^{(m-1)}|^2+(f(y^{(m-1)})- \eta)^2\right)^{m/2}}-\frac{(f(y^{(m-1)})+\eta)}{\left(|y^{(m-1)}|^2+(f(y^{(m-1)})+\eta)^2\right)^{m/2}} \right| dy^{(m-1)}
\end{equation}
In fact these are exactly the integrands estimated in \cite[page 363-364]{pom3} using simple algebraic manipulations, which we have  included in Appendix \ref{ap1} for the reader's convenience. Upon substitution of $y^{(m-1)} = \eta u^{(m-1)}$ these calculations imply that the integrals (\ref{ji}) and (\ref{jm})  are bounded  by
$$\int\limits_{B^{(m-1)}_{\rho/\eta}(0)}\frac{1}{\left(|u^{(m-1)}|^2+1\right)^{m/2}}\, du^{(m-1)}<+\infty, \qquad , $$
uniformly in $0 <\eta\leq r$  and  $x\in \partial D\cap B_r(0)$. This completes the proof of Lemma \ref{c2}.
\end{proof}

\section{Proof of our main results}
\label{proof}
In our proof of the main results we shall make use of a regularity result about $H^2_0(D)$ solutions to (\ref{nsp}). A central ingredient in the proof of this regularity result is the regularity analysis for the volume potential found in the previous section.

\begin{proposition}\label{regs}
Assume that $\partial D$ is  Lipschitz,  $0\in \partial D$, and the refractive index is given by  $n\in L^\infty(D)$. Furthermore, we assume that  $n \in C^{\alpha}(\overline D \cap B_R(0))$ for some ball $B_R(0)$ of radius $R$ centered at $0$ and some $0<\alpha\leq 1$. Then  $u\in H^2_0(D)$, that satisfies (\ref{nsp}), lies in $C^1(\overline{D})$, and has all its second derivatives $\left\{u_{i,j}\right\}_{i,j=1,m}$ uniformly bounded in $D\cap B_r(0)$ for some $r>0$.
\end{proposition}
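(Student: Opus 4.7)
The plan is to exploit the vanishing Cauchy data of $u$ by extending it trivially outside $D$ and writing the result as a Newtonian potential, so that Lemmas \ref{c1} and \ref{c2} can be invoked in succession. Set $\tilde u = u$ in $D$ and $\tilde u \equiv 0$ in $\mathbb{R}^m \setminus \overline D$. Since $u \in H^2_0(D)$, the extension belongs to $H^2(\mathbb{R}^m)$ with compact support, and $\Delta \tilde u$ is the zero-extension of $\psi := \Delta u = k^2(1-n)v - k^2 n u$. The Sobolev embedding $H^2 \hookrightarrow L^\infty$ in dimensions $m \leq 3$, together with $n \in L^\infty(D)$ and the analyticity of $v$ near $\overline D$, gives $\psi \in L^\infty(D)$. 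Write $w_\psi(x) = \int_D \Phi(x,y)\psi(y)\,dy$.

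The key nontrivial step is to prove the global representation $\tilde u = -w_\psi$ on $\mathbb{R}^m$. Since $-\Delta \Phi = \delta$ distributionally, the function $h := \tilde u + w_\psi$ is harmonic on all of $\mathbb{R}^m$. In three dimensions $w_\psi(x) = O(|x|^{-1})$ at infinity, so $h$ is bounded and tends to zero, hence $h \equiv 0$ by Liouville. In two dimensions the a priori logarithmic leading term $-\frac{1}{2\pi}(\ln|x|)\int_D \psi$ cancels because $\int_D \psi = \int_D \Delta u = 0$, by Green's identity and the vanishing Cauchy data of $u$; so $w_\psi = O(|x|^{-1})$ at infinity, and once more $h \equiv 0$. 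This two-dimensional cancellation is the main technical subtlety. With $\tilde u = -w_\psi$ in hand, Lemma \ref{c1} applied with the bounded density $\psi$ gives $w_\psi \in C^1(\mathbb{R}^m)$, whence $u = -w_\psi|_{\overline D} \in C^1(\overline D)$ and in fact $\tilde u \in C^1(\mathbb{R}^m)$.

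For the uniform bound on second derivatives near $0$, invoke the hypothesis $n \in C^\alpha(\overline D \cap B_R(0))$. Take a Whitney-type $C^\alpha$ extension $\tilde n$ of $n$ to $B_R(0)$. Because $\tilde u \in C^1(\mathbb{R}^m)$ and $v$ is smooth on $B_R(0)$, the function $\bar\psi := k^2 v - k^2 \tilde n(\tilde u + v)$ is a $C^\alpha$ extension of $\psi$ to $B_R(0)$, which places $w_\psi$ in the setting of Lemma \ref{c2}. In a coordinate system in which $\partial D$ near $0$ is the graph $x_m = f(x^{(m-1)})$, Lemma \ref{c2} bounds the symmetric jumps $\partial^2 w_\psi/\partial x_i \partial x_j(x + \eta e_m) - \partial^2 w_\psi/\partial x_i \partial x_j(x - \eta e_m)$ uniformly for $x \in \partial D \cap B_r(0)$ and $0 < \eta \leq r$. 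The non-radiating identity $\tilde u \equiv 0$ outside $\overline D$ forces $w_\psi \equiv 0$ on the open exterior of $\overline D$, so each outside value $\partial^2 w_\psi/\partial x_i \partial x_j(x - \eta e_m)$ is identically zero, and the symmetric-jump bound reduces to a one-sided bound on $\partial^2 w_\psi/\partial x_i \partial x_j(x + \eta e_m)$. Every point of $D \cap B_{r'}(0)$, for $r'$ chosen small enough relative to the Lipschitz constant of $f$, can be written in this form with $x \in \partial D \cap B_r(0)$ and $0 < \eta \leq r$, so every second partial derivative of $u = -w_\psi$ is uniformly bounded on $D \cap B_{r'}(0)$; rotating back to the original Cartesian coordinates preserves the bound.
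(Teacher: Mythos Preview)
Your proof is correct and follows essentially the same route as the paper: extend $u$ by zero, represent the extension as (minus) the Newtonian potential $w_\psi$, and then invoke Lemmas~\ref{c1} and~\ref{c2}, using the vanishing of $w_\psi$ outside $D$ to turn the symmetric-jump bound into a one-sided bound. The one place where you go beyond the paper is the justification of the identity $\tilde u=-w_\psi$: the paper simply asserts this representation, whereas you verify it via Liouville's theorem and, in two dimensions, observe that the logarithmic term at infinity is killed by $\int_D\psi=\int_D\Delta u=0$ (from the vanishing Cauchy data). That extra care is a genuine improvement in rigor; otherwise the two arguments coincide.
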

\begin{proof}
First we remark that the incident field $v$ is real analytic in $\overline D$, as it is an ($L^2$) solution of Helmholtz equation in a region containing $\overline D$. Introduce the function 
$$
U(x) = \begin{cases} u(x) &\hbox{ for } x \in D~, \\ 
0 &\hbox{ for } x \in \mathbb{R}^m\setminus D~.\end{cases}
$$ 
This function is in $H^2(\mathbb{R}^m)$ (since $u\in H^2_0(D)$) and since $m=2$ or $3$, it follows from the Sobolev embedding Theorem that $U \in C^{\alpha}(\mathbb{R}^m)$ for some $0<\alpha<1$.  $U$ is a solution of 
$$
\Delta U =\Psi \hbox{ in } \mathbb{R}^m ~~, ~~ \hbox{ where }  \Psi(x) =\begin{cases} \psi(x)&\hbox{ for } x \in D~ \\ 
0 &\hbox{ for } x \in \mathbb{R}^m\setminus D  \end{cases} 
$$
with $\psi(x)= k^2(1-n(x))v(x)-k^2n(x)u(x),~x \in D$. The function $\psi$ is clearly in $L^\infty(D)$, and due to the assumptions about $n$ and $v$, and the $C^\alpha$ extendability of $u$, it has an extension that lies in $C^\alpha(B_r(0))$. The solution $U$ is now given by the formula
$$
U(x)= - \int_D \psi(y) \Phi(x,y)\,dy =-w_\psi(x)
$$
with $\psi= k^2(1-n)v-k^2nu \in L^\infty(D)\cap C^{\alpha}(B_r(0))$. Lemma \ref{c1} and Lemma \ref{c2} of the preceding section therefore apply to $U$. Lemma \ref{c1} implies that $U\in C^1(\mathbb{R}^m)$ and, since $U=0$ outside $D$, Lemma \ref{c2} implies that all second derivatives of $u$ are uniformly bounded in $D \cap B_r(0)$ for some $r>0$. 
\end{proof}

\vskip 10pt
\begin{remark} \label{rem2}
In the above proof of Proposition \ref{regs} it is shown that $U$ is in $C^1(\mathbb{R}^m)$; as a consequence $u$ has an extension (by zero) which is in  $C^1(\mathbb{R}^m)$.
We also note that, the fact that all second derivatives of $u$ are shown to be uniformly bounded in $D\cap B_r(0)$ implies that $u$ is in $C^{1,1}(\overline{D}\cap B_r(0))$.
\end{remark}

\noindent
To obtain the main results of our paper we need to use Theorem \ref{th1}, which requires a real valued solution. With this in mind, we note that the real valued function $w =\Re(u)$ is an $H^2(D)$ solution to 
\begin{equation}\label{real1}
\Delta w+k^2nw=-k^2(n-1)\Re(v)~~\hbox{ with } w=\frac{\partial w}{\partial \nu}=0 \hbox{ on } \partial D~.
\end{equation}
Since the incident wave is an $L^2$ solution to $\Delta v+k^2v=0$ in a neighborhood of $\overline D$, it follows that $\Re(v)$ is a real analytic functions on $\overline D$. In particular, Proposition \ref{regs} also applies to $w$. Of course, one could consider the imaginary part of the scattered field $u$, which satisfies the same equation as above with $\Re(v)$ replaced by $\Im(v)$. Accordingly, in what follows, everything holds true if we replace $\Re(v)$ by $\Im(v)$. 

\noindent
To apply Theorem \ref{th1}  to  (\ref{real1}), we must first appeal to Theorem \ref{th2}  in order to establish that  $w\in C^2(\overline{D}\cap B_r(0))$ and that $\partial D\cap B_r(0)$ is of class $C^1$. Proposition \ref{regs} (see also Remark \ref{rem2}) guaranties that $w\in C^{1,1}(\overline D\cap B_r(0))$ and that $g= -k^2(nw+(n-1)\Re(v))$ has a $C^1$ extension to all of $\mathbb{R}^m$. The essential, missing step for application of Theorem \ref{th2} is therefore to show that $w$ is of one sign. This is established by the following proposition.

\begin{proposition}\label{prep2}
Assume that $\partial D$ is  Lipschitz,  $0\in \partial D$, and the refractive index is given by  $n\in L^\infty(D)$. Furthermore, suppose $n$ lies in $C^{1,1}(\overline{D}\cap B_r(0))$ for some ball $B_r(0)$ of radius $r$ centered at $0$, and suppose $(n(0)-1)\Re (v(0))\neq 0$.  Let  $w\in H^2_0(D)$ be a solution to (\ref{real1}). Then $w<0$ in $D\cap B_r(0)$ for some $r>0$ if $(n(0)-1)\Re (v(0))>0$, and $w>0$ in $D\cap B_r(0)$ for some $r>0$ if $(n(0)-1)\Re (v(0))<0$.
\end{proposition}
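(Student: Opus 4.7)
My proof plan has two main ingredients: a pointwise Hessian computation for $w$ at almost every boundary point, extracted from the PDE together with the zero Cauchy data, followed by a blow-up/compactness argument that upgrades this pointwise information to a uniform sign statement on $D\cap B_r(0)$.

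First, I would invoke Proposition \ref{regs} and Remark \ref{rem2} to obtain $w\in C^{1,1}(\overline D\cap B_r(0))$, and use the zero-extension of $w$ to conclude that both $w$ and $\nabla w$ vanish continuously on $\partial D\cap B_r(0)$. For a full-measure set of boundary points $y\in\partial D\cap B_r(0)$, the Lipschitz graph of $\partial D$ is differentiable at $y$ (Rademacher) and $\nabla w\in W^{1,\infty}$ is differentiable at $y$ (Rademacher again). At such a regular point, using local coordinates with $\nu(y)=e_m$ and $\partial D$ written as $\{x_m=\varphi(x')\}$ with $\nabla\varphi(y')=0$, I would twice-differentiate $w(x',\varphi(x'))\equiv 0$ and once-differentiate $\partial_m w(x',\varphi(x'))\equiv 0$ at $x'=y'$. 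These identities force the tangential block and the mixed tangential-normal entries of $D^2 w(y)$ to vanish; since $w(y)=0$, the PDE then gives $\partial_{mm} w(y)=\Delta w(y)=f(y)$ with $f:=-k^2(n-1)\Re(v)$, so that
\[
D^2 w(y) = f(y)\,\nu(y)\otimes \nu(y)\qquad\text{at a.e.\ }y\in\partial D\cap B_r(0).
\]

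Assuming without loss of generality $f(0)<0$ (the opposite sign case is symmetric), suppose for contradiction that $w(x_n)\ge 0$ for some sequence $x_n\in D$ with $x_n\to 0$. I would select a regular boundary point $y_n$ (in the almost-everywhere sense above) satisfying $|x_n-y_n|\le 2\,\mathrm{dist}(x_n,\partial D)$ and $\xi_n\cdot\nu(y_n)\le -c$, where $\xi_n:=(x_n-y_n)/|x_n-y_n|$ and $c>0$ depends only on the Lipschitz constant of $\partial D$; such $y_n$ exist because, at any exactly nearest point, the direction $\xi_n$ is inward-normal, and regular points are dense. Setting $r_n:=|x_n-y_n|\to 0$ and rescaling $\widetilde w_n(z):=w(y_n+r_n z)/r_n^2$, the uniform $C^{1,1}$ bound yields, along a subsequence, a $C^1_{\mathrm{loc}}$ limit $\widetilde w_\infty$ on the half-space $T$ obtained as the limit of the rescaled domains at the regular points $y_n$. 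The limit satisfies $\Delta\widetilde w_\infty=f(0)$ in $T$ with zero Cauchy data on $\partial T$ and quadratic growth. A direct computation (solving the Dirichlet problem on $T$ with the induced quadratic boundary data and invoking Liouville for quadratically growing harmonic functions with zero Cauchy data) uniquely determines $\widetilde w_\infty(z)=\tfrac{f(0)}{2}z_\perp^2\le 0$, where $z_\perp$ is the inward-normal coordinate in $T$. The rescaled points $\widetilde x_n:=(x_n-y_n)/r_n$ have $|\widetilde x_n|=1$ and $\widetilde x_n\cdot\nu(y_n)\le -c$, so their subsequential limit $\widetilde x_\infty$ lies in the open interior of $T$ with $\widetilde w_\infty(\widetilde x_\infty)\le \tfrac{f(0)}{2}c^2<0$, contradicting $\widetilde w_n(\widetilde x_n)=w(x_n)/r_n^2\ge 0$.

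The most delicate point is the blow-up step, in particular (i) producing regular boundary points $y_n$ whose outward normal remains quantitatively transversal to the direction $x_n-y_n$ as $n\to\infty$, and (ii) identifying the rescaled domains with a common half-space limit when $\partial D$ is merely Lipschitz at $0$. Both issues rely on the density of regular boundary points together with the uniform interior-cone property of Lipschitz domains. Once this selection is made, the identification of the blow-up limit and the resulting contradiction are routine.
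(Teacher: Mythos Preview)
Your strategy is genuinely different from the paper's. The paper follows Williams \cite{pom3} and never performs a blow-up. Instead it (i) writes each $w_{i,j}$ as a bounded harmonic function plus a $C^1$ volume potential and invokes Hunt--Wheeden \cite{hw} together with Dahlberg's equivalence of harmonic measure and surface measure \cite{dah} to obtain \emph{non-tangential} boundary limits of $w_{i,j}$ at a.e.\ point of $\partial D\cap B_r(0)$; (ii) computes these limits from the PDE and the vanishing Cauchy data, obtaining $w_{m,m}\le -\epsilon<0$ a.e.\ on the boundary in the graph direction; (iii) propagates this sign into the interior by a subharmonic barrier / maximum-principle argument (Lemma \ref{cl1}); and (iv) integrates along vertical segments to conclude $w<0$.

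Your argument has a genuine gap at the blow-up step, exactly where you flag it as ``most delicate.'' For the Liouville identification of $\widetilde w_\infty$ you need the rescaled domains $(D-y_n)/r_n$ to converge to a fixed half-space. But differentiability of the Lipschitz graph $\varphi$ at a regular point $y_n'$ only gives
\[
\varphi(y_n'+h)-\varphi(y_n')-\nabla\varphi(y_n')\cdot h = o_{y_n}(|h|)\qquad (h\to 0),
\]
with a modulus depending on $y_n$. Since both the center $y_n$ and the scale $r_n$ vary with $n$, nothing forces $o_{y_n}(r_n|h'|)/r_n\to 0$; the rescaled boundaries may simply fail to flatten. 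Neither the density of regular boundary points nor the uniform interior-cone property of Lipschitz domains supplies the required \emph{uniform} first-order flatness at the moving centers. Without half-space convergence the limit problem is posed on an unknown Lipschitz epigraph, the identification $\widetilde w_\infty=\tfrac{f(0)}{2}z_\perp^2$ is lost, and the contradiction collapses. Your blow-up scheme would go through if $\partial D$ were already $C^1$ near $0$ (uniform flatness is then automatic), but that is precisely what one is trying to establish downstream via Theorem \ref{th2}.

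A secondary point: your Hessian computation appeals to Rademacher for $\nabla w\in W^{1,\infty}$, but Rademacher gives differentiability a.e.\ with respect to $m$-dimensional Lebesgue measure, which says nothing on the $(m{-}1)$-dimensional set $\partial D$; and differentiating the identically zero boundary trace $x'\mapsto \nabla w(x',\varphi(x'))$ is vacuous. This is exactly why the paper works with non-tangential limits of $w_{i,j}$ rather than pointwise second derivatives at boundary points. Your blow-up argument does not in fact use the Hessian formula, so this is not the fatal flaw; the domain-convergence issue above is.
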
 
\begin{proof}
The proof of this proposition follows almost verbatim the analysis by Williams in \cite[Section 5]{pom3}. However, since \cite{pom3} deals with a slightly simpler equation, and since we assume these techniques may not be known to the reader, we provide the main steps of the proof. We provide sufficient details where our case differs from the one considered in  \cite[Theorem 3]{pom3}, and otherwise refer the reader to \cite{pom3}.

\noindent
To fix ideas we consider the case $(n(0)-1)\Re(v(0))>0$. In the case when  $(n(0)-1)\Re(v(0))<0$ the result is verified by considering $-w$ (and $-v$) instead of $w$ (and $v$). Since  $(n-1)\Re(v)$ is  $C^1( B_r(0))$, by decreasing $r$ if necessary, it now follows that
\begin{equation}\label{fsign}
\mbox{$(n(x)-1)\Re(v(x))\geq \gamma >0$  for all  $x\in \overline{D}\cap B_r(0)$.}
\end{equation}

\noindent
We consider the cylindrical neighborhood $N$ of $0\in \partial D$ introduced  in Section \ref{secreg} in the paragraph just before Lemma \ref{c2}. We recall  that  $N:=N(\rho,h)=B_\rho^{(m-1)}(0)\times [-h, h]$, $N\cap \partial D$ is the graph $x_m=f(x^{(m-1)})$ of a Lipshitz continuous function $f:B_\rho^{(m-1)}(0)\to {\mathbb R}$ with Lipschitz constant $K$, $h>K\rho$,  and 
$$
N\cap D=\left\{(x^{(m-1)}, x_m)~:~ \, x^{(m-1)}\in B_\rho^{(m-1)}(0), \, f(x^{(m-1)})<x_m<h\right\}.
$$
The function $w\in H_0^2(D)$, which in Proposition \ref{regs} is shown to have all its second derivatives $\{w_{i,j}\}_{i,j=1,m}$  uniformly bounded in $N\cap D$, solves 
\begin{equation}\label{ewij}
\Delta w=-k^2(nw+(n-1)\Re(v)) \qquad \mbox{in }\; N\cap D.
\end{equation}
Set $g:=-k^2(nw+(n-1)\Re(v))$. From our assumption about $n$ and the analyticity of $\Re (v)$ we have that $g_{i,j}:=\displaystyle{\frac{\partial^2 g}{\partial x_i\partial x_j}}$, $i,j=1..m$, exist almost everywhere in  $N\cap D$ and are in $L^\infty(N\cap D)$. Furthermore,
$$\Delta w_{i,j}=g_{i,j}\qquad \mbox{almost everywhere in}\; N\cap D.$$
Using Lemma \ref{c1} we can (by means of a volume potential) construct  functions $q_{i,j}\in C^1({\mathbb R}^m)$ such that
$$ 
\Delta q_{i,j}=g_{i,j}\qquad \mbox{almost everywhere in} \; N\cap D.
$$
Thus each $w_{i,j}-q_{i,j}$ is a bounded harmonic function  in $N\cap D$. By taking the radius $\rho$ of the ball $B_\rho^{(m-1)}(0)$ sufficiently small we can assume that $N\cap D$ is starlike about some point in $N\cap D$. Therefore, from \cite[Section 2, page 311]{hw} we can conclude that each $w_{i,j}-q_{i,j}$ has non-tangential limits at all points of $N\cap \partial D$,  except for a possible (Borel) subset of  zero harmonic measure. Since $q_{i,j}\in C^1({\mathbb R}^m)$, we can then conclude that each $w_{i,j}$ has finite non-tangential limits on $N\cap \partial D$ except for a possible subset of  zero harmonic measure.  From geometric measure theory it is known (see e.g. \cite[Theorem 1, page 275]{dah} or \cite{harmonic}) that the (Borel) boundary sets, of a Lipschitz domain, which have harmonic measure zero, are exactly those boundary sets which have $(m-1)$-dimensional  Hausdorff measure zero. Therefore each $w_{i,j}$ has  finite non-tangential limit  at $\left(x^{(m-1)}, f(x^{(m-1)})\right)$ for all  $x^{(m-1)}\in B_\rho^{(m-1)}(0)$, except for a possible subset of $(m-1)$-dimensional zero Lebesgue measure. 

\noindent
By Rademacher's theorem $f:B_\rho^{(m-1)}(0)\to {\mathbb R}$ is differentiable almost everywhere  in $B_\rho^{(m-1)}(0)$. We  now introduce  the subset
$$G:=\left\{x^{(m-1)}\in B_\rho^{(m-1)}(0), \,\mbox{where both  $\nabla f$ and the non-tangential limits of all $w_{i,j}$ exist}\right\}.$$ Note that $B_\rho^{(m-1)}(0)\setminus G$ has zero Lebesgue measure. We note that the non-tangential limit of $w_{i,j}$ at a  $x_0:=\left(x_0^{(m-1)}, f(x_0^{(m-1)})\right)$ (for $x_0^{(m-1)} \in G$) is the limiting value as we approach $x_0$  by  $x\in N\cap D$  from inside any cone $C_\epsilon(x_0):=\left\{x: (x-x_0)\cdot \nu_{x_0}\leq -\epsilon |x-x_0| \right\}$, $\epsilon>0$,  where $\nu_{x_0}$ denotes the outward normal vector to $\partial D$ at $x_0$.

\noindent
Next we compute the non-tangential limits of $w_{i,j}$ at  $\left(x^{(m-1)}, f(x^{(m-1)})\right)$ for $x^{(m-1)}\in G$. For a fixed $x_0^{(m-1)}\in G$ we can  setup a local coordinative system such that  $\left(x_0^{(m-1)}, f(x_0^{(m-1)})\right)=(0^{(m-1)},0)$,  $x_m=0$ coincides with the tangential plane to the graph of $f$, and the points $(0^{(m-1)},h)$ for $h>0$ small enough are in $D$. $\partial_i$, $1\le i\le m-1$ with respect to this local coordinate system then denotes a tangential derivative to $\partial D$. Following \cite[Lemma 2.1(b') page 82]{caf2}, we consider the $(m-1)$ dimensional disk regions (inside $D$) with radius $\rho_\ell$, defined as $D_\ell:= t_\ell\cap C_\epsilon(0)$ where $\{t_\ell\}_{\ell\in {\mathbb N}}$ is a sequence of planes parallel to the tangential plane to $\partial D$ at $0$, and converging to it. Then we have
\begin{equation}
\label{average}
\frac{1}{\rho_l^{m-1}}\left|\int\limits_{D_\ell}w_{i,j} \, dx\right|\leq \frac{1}{\rho_l^{m-1}}\int\limits_{\partial D_\ell} \left| w_j \right|\, ds\leq C\epsilon ~, \qquad \mbox{as $\ell \to \infty$}~,
\end{equation}
because $w_j$ is Lipshitz continuous and vanishes on the free boundary (and the distance to the boundary is $\rho_l \epsilon$).  Since $w_{i,j}$ has a limit, call it $l_{i,j}$, from within $C_\epsilon(0)$ we may conclude from (\ref{average}) that $| l_{i,j} |<C\epsilon$ which implies that $\l_{i,j}=0$ since $\epsilon>0$ is arbitrary. There is only one remaining second derivative, namely the one corresponding to differentiation twice with respect to the $m'th$ local variable, whose limit we need to calculate. This second derivative actually coincides with $\frac{\partial^2}{\partial \nu_0^2}w$, where $\nu_0$ denotes the outward normal to $\partial D$ (at $(x_0^{(m-1)}, f(x_0^{(m-1)}))$). Using (\ref{ewij}), the fact that $w$ vanishes on the boundary and  is uniformly continous up to the boundary, together with (\ref{fsign}) we may now conclude that limit of  $w_{\nu_0,\nu_0}$ is $l_{\nu_0,\nu_0}\leq- k^2 \gamma<0$, and this holds for all $x_0^{(m-1)}\in G$.

\noindent
Now  we go  back to the fixed global coordinate system with the fixed $0\in \partial D$, and we denote by $\nu(x^{(m-1)})$ the (outward) normal vector to the tangent plane to the graph of $f$ at $\left(x^{(m-1)}, f(x^{(m-1)})\right)$ for  $x^{(m-1)}\in G$. Since $f$ is Lipshitz we have that the angle between $-\nu(x^{(m-1)})$ and $e_m$ (the unit vector in the  $m$-direction) satisfies  $\theta(-\nu(x^{(m-1)}), e_m)\leq \theta_0<\pi/2$ for all  $x^{(m-1)}\in G$.  Then from the above we obtain that $w_{m,m}$ (the non-tangential limit of the second derivative in the $e_m$ direction)  satisfies 
\begin{equation}\label{ggg}
w_{m,m}(x^{(m-1)}, f(x^{(m-1)})\leq -k^2 \gamma \cos^2(\theta_0)=-\epsilon<0 \; \qquad \mbox{for all $x^{(m-1)}\in G$}~.
\end{equation}
The idea is now, based on the sign property (\ref{ggg}), to construct a subharmonic function involving $w_{m,m}$ that takes negative values uniformly on the boundary of a neighborhood of $0\in \partial D$ in $\overline{D}$, and then use a maximum principle for subharmonic functions to infer the same sign property inside the neighborhood.

\noindent
To this end, let $P>0$ be a positive constant. Then from (\ref{ewij}) we have
$$\Delta\left(w_{m,m}(x)+P |x |^2\right)=\Delta w_{m,m}(x)+2mP=g_{m,m}(x) +2mP\qquad \mbox{for}\; x\in N\cap D~.$$
Since  $g_{m,m}$ is uniformly bounded in $x\in N\cap D$,  it is possible to choose $P>0$ large enough so that $g_{m,m}(x) +2mP\geq 0$ for $x\in N\cap D$. Thus the function  $w_{m,m}+P|x|^2$ is subharmonic in $N\cap D$. Let $K$ be the Lipschitz constant of $f$. Now, pick a smaller neighborhood of $0$, $N^*:=N(\rho^*,h^*)$ with $\rho^*<\rho$, $h^*<h$, $h^*>K \rho*$ so that $P|x|^2\leq \epsilon/2$ in $N^*$ where $\epsilon$ is the constant in (\ref{ggg}).  The boundary of $N^*\cap D$ can be split into  $\partial (N^*\cap D)=\Gamma_1\cup \Gamma_2$ where 
$$\Gamma_1:=\left\{(x^{(m-1)}, x_m)~:~ \, x^{(m-1)}\in \overline{B_{\rho^*}^{(m-1)}(0)}, \, f(x^{(m-1)})<x_m\right\}\cap \partial(N^*\cap D)~.
$$
and
$$\Gamma_2:=\left\{(x^{(m-1)}, x_m)~:~ \, x^{(m-1)}\in \overline{B_{\rho^*}^{(m-1)}(0)}, \, f(x^{(m-1)})=x_m\right\}~.
$$
In particular we have that $w_{m,m}+P|x|^2\leq - \epsilon/2$ almost everywhere  on $\Gamma_2$ (in the sense of non-tangential limits).  The next step is to control the sign on $\Gamma_1$. For this purpose, we consider the (exterior) cone ${\mathcal C}$ with vertex $0$ and opening $\eta$ with $0<\eta<\tan^{-1}(1/K)$
$${\mathcal C}:=\left\{x:=\left(x^{(m-1)},x_m\right)\in {\mathbb R}^m: \, x_m<0,\; |x^{(m-1)}|<|x_m | \tan \eta\right\}~.$$
Note that $\overline D\subseteq {\mathbb R}^m\setminus {\mathcal C}$.
It is quite easy (see e.g. \cite[Lemma 2.4, page 62]{HK}) to construct a barrier function $u$ defined and continuous in ${\mathbb R}^m\setminus {\mathcal C}$, subharmonic in ${\mathbb R}^m\setminus \overline{\mathcal C}$, and such that $u(x)\leq 0$ for $x\in {\mathbb R}^m\setminus {\mathcal C}$, with equality holding if and only if $x=0$. Now consider the subharmonic function $w_{m,m}+P |x|^2+Mu$ for some constant $M>0$, to be chosen later.  Since $Mu\leq 0$ we still have
\begin{equation}\label{bdd1}
w_{m,m}(x)+P |x |^2+Mu(x)\leq -\epsilon/2 \;\qquad \mbox{ for almost all} \, \; x\in \Gamma_2~.
\end{equation}
Since $\Gamma_1$ is away from $0$, we have that $\sup_{x\in \Gamma_1}u(x)<0$. Therefore, thanks to the fact that  $w_{m,m}$  is uniformly bounded in $N\cap \overline D$,  we can find a constant $M>0$ such that 
\begin{equation} \label{bdd2}
w_{m,m}(x)+P |x|^2+Mu(x)\leq -\epsilon/2 \;\qquad \mbox{ for} \,\; x\in \Gamma_1~.
\end{equation}
Lemma \ref{cl1} (see below) now implies  that the above inequality holds in the interior of $N^*\cap D$, in particular we may conclude
$$ 
w_{m,m}(x)\leq -\epsilon/2 - P |x|^2- Mu(x)\;\qquad \mbox{ for all} \, \; N^*\cap D~.
$$
Since $u(x)\to 0$ as $|x|\to 0$ for $x\in  {\mathbb R}^m\setminus {\mathcal C}$, we can therefore find a sufficiently small  $\rho_0>0$ such that 
$$ w_{m,m}(x)\leq -\epsilon/4 \;\; \mbox{for all $x\in N^*\cap D$ with $|x|\leq \rho_0$}.$$
This, along with the fact that  $w=\partial w/\partial x_m=0$ on $\Gamma_2$, implies that $w<0$ along all lines in the direction $e_m$ (inside $N^* \cap D \cap \{ |x|<\rho_0\}$) which finally proves  that $w<0$ in a neighborhood of $0$ in $D$.\end{proof}
\begin{lemma}\label{cl1}
Let the notations and assumptions be as in the proof of Proposition \ref{prep2}. If (\ref{bdd1}) and (\ref{bdd2}) hold then
$$w_{m,m}(x)+P|x|^2+M u(x)\leq -\epsilon/2 \qquad \mbox{for all $x\in N^*\cap D$~.}$$ 
\end{lemma}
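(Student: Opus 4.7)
The plan is to treat the lemma as a maximum principle for the function
$$V(x) := w_{m,m}(x) + P|x|^2 + M u(x) + \epsilon/2$$
on $N^*\cap D$, exploiting the same decomposition of $w_{m,m}$ that was introduced in the proof of Proposition \ref{prep2}. First I would verify that $V$ is subharmonic in the distributional sense on $N^*\cap D$: indeed $\Delta V = g_{m,m} + 2mP + M\Delta u$, which is nonnegative because $g_{m,m}+2mP\geq 0$ by the choice of $P$ and $\Delta u \geq 0$ since $u$ is subharmonic in $\mathbb{R}^m\setminus\overline{\mathcal{C}}\supset N^*\cap D$. Proving $V\leq 0$ in $N^*\cap D$ is then exactly the claimed estimate.

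Next I would use the decomposition $w_{m,m}=q_{m,m}+h$ employed in the proof of Proposition \ref{prep2}, where $q_{m,m}\in C^1(\mathbb{R}^m)$ is a volume potential with $\Delta q_{m,m}=g_{m,m}$ almost everywhere and $h:=w_{m,m}-q_{m,m}$ is bounded and harmonic in $N^*\cap D$. This gives $V=h+\Psi$ with $\Psi(x):=q_{m,m}(x)+P|x|^2+Mu(x)+\epsilon/2$ continuous on $\overline{N^*\cap D}$ and subharmonic in its interior. I would then collect the boundary behavior of $V$: on $\Gamma_1$, which (off a set of $(m-1)$-dimensional Hausdorff measure zero) is contained in the open set $D$, interior elliptic regularity applied to (\ref{real1}) makes $w_{m,m}$ continuous up to $\Gamma_1$, so (\ref{bdd2}) gives $V\leq 0$ there in the classical sense; on $\Gamma_2\subset\partial D$ the ingredients $q_{m,m}$, $|x|^2$, $u$ are continuous up to $\partial D$, while the proof of Proposition \ref{prep2} shows that $w_{m,m}$ has non-tangential limits almost everywhere on $\Gamma_2$, and (\ref{bdd1}) then gives a non-tangential limit $V^*\leq 0$ almost everywhere on $\Gamma_2$.

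The last step is to deduce the pointwise bound inside $N^*\cap D$ from these a.e.\ boundary estimates. The set $N^*\cap D$ is Lipschitz (indeed starlike, for $\rho^*$ small), so by Dahlberg's theorem \cite{dah} harmonic measure $\omega^x$ is mutually absolutely continuous with surface measure on $\partial(N^*\cap D)$. The bounded harmonic function $h$ therefore admits the Poisson representation $h(x)=\int h^*(y)\,d\omega^x(y)$, while the continuous subharmonic $\Psi$ is dominated by its harmonic extension, $\Psi(x)\leq \int \Psi(y)\,d\omega^x(y)$. Adding these yields
$$V(x)\leq \int_{\partial(N^*\cap D)} V^*(y)\,d\omega^x(y) \leq 0,$$
which is the desired conclusion. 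The main technical obstacle is precisely this passage from boundary values defined only as non-tangential limits almost everywhere to a pointwise inequality in the interior; it is the splitting $V=h+\Psi$ together with the Fatou/Dahlberg theory of bounded harmonic functions in Lipschitz domains that makes this passage legitimate.
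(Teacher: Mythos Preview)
Your proposal is correct and follows essentially the same route that the paper defers to in Williams \cite{pom3}: the decomposition $V=h+\Psi$ into a bounded harmonic part and a continuous subharmonic part, together with the Fatou/Dahlberg theory for bounded harmonic functions on Lipschitz domains, is precisely the device Williams uses to overcome the lack of continuity of $w_{m,m}$ up to the free boundary $\Gamma_2$. Your identification of this as the key technical obstacle, and your use of the Poisson representation $h(x)=\int h^*\,d\omega^x$ combined with the subharmonic majorization $\Psi(x)\le\int\Psi\,d\omega^x$, matches the argument on pages 366--368 of \cite{pom3}.
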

\noindent
This result (though not directly stated as a lemma) is proven from bottom of page 366 through the top of page 368 in Williams \cite{pom3}, and we refer the reader to that paper. This lemma amounts to a ``maximum principle" for the subharmonic function  $w_{m,m}(x)+P|x|^2+M u(x)$, the added difficulty being the lack of apriori knowledge that this function is continuous on $\overline{N^*\cap D}$. 

\bigskip

\noindent
We are now ready to give the proof of the first main result of this paper.
\begin{proof}[Proof of Theorem \ref{scatnon1}]
The proof proceeds by contradiction. Suppose the incident field $v$ is not scattered by $(D,n)$.
Without loss of generality we assume that $(n(x_0)-1)\Re(v(x_0)) \ne 0$ and we choose $x_0$ to be the origin of the coordinate system (the argument works similarly if $(n(x_0)-1)\Im(v(x_0)) \ne 0$). The function $w =\Re(u)$ is a solution to (\ref{real1}). Since the incident wave is an $L^2$ solution to $\Delta v+k^2v=0$ in a neighborhood of $\overline D$, it follows that $\Re (v)$ is a real analytic functions on $\overline D$.  
By assumption the refractive index $n$ is also real analytic on $\overline{D_\delta}$, and so the assumptions of Proposition \ref{regs}  and Proposition \ref{prep2} are satisfied.  In particular, Proposition \ref{regs} (and the remark following) implies that $w\in C^{1,1}(\overline D\cap B_r(0))$ for some ball $B_r(0)$, and that it has a $C^1$ extension to all of $\mathbb{R}^m$. Proposition \ref{prep2} implies that $w\geq 0$ or $w\leq 0$ in $\overline D\cap B_r(0)$ depending on whether $(n(0)-1)\Re(v(0))<0$ or $(n(0)-1)\Re(v(0))>0 $ respectively. We now introduce $g:=-k^2(nw+(n-1)\Re(v))$. Thanks to the $C^1$ extendability of $w$, and the analyticity of $n$ and $\Re(v)$, the function $g$ has  a $C^1$-extension $g^*$ in a neighborhood of  $\overline{D}\cap B_R(0)$. Since $w$ vanishes at $\partial D$, $g(0)=-k^2(n(0)-1)\Re(v(0))$, and so it follows that $g^*\geq \gamma>0$  in $\overline{D}\cap B_r(0)$ or $g^*\leq -\gamma<0$  in $\overline{D}\cap B_r(0)$ (with $r$ sufficiently small) depending on whether $(n(0)-1)\Re(v(0))<0$ or $(n(0)-1)\Re(v(0))>0$, respectively. Since $w$ satisfies $\Delta w= g$ in $D$, the assumptions of Theorem \ref{th2} are now satisfied for $w$, if $(n(0)-1)\Re(v(0))>0 $, and for $-w$ if $(n(0)-1)\Re(v(0))<0 $. In both cases we may therefore conclude that $w \in C^2(\overline {D}\cap B_r(0))$  and $\partial D\cap B_r(0)$ is of class $C^1$.

\noindent
We now apply Theorem \ref{th1}. We set $a(x) = k^2n(x)$ and $b(x)=k^2(1-n(x))\Re (v(x))$, then $a$ and $b$ are both real analytic, by assumption $a(0), \,b(0)\ne 0$ and $w \in C^2(\overline{D}\cap B_r(0))$ satisfies
$$
\Delta w +a w = b \hbox{ in } D~,  \hbox{ with } w=\frac{\partial w}{\partial \nu}=0 \hbox{ on } \partial D ~,
$$
where $\partial D\cap B_r(0)$ is known to be of class $C^1$. The third case in Theorem \ref{th1} yields that $\partial D \cap B_r(0)$ is real analytic for $r$ sufficiently small. However, this represents a contradiction, and so we conclude that the incident field $v$ is scattered by $(D,n)$, thus completing the proof of Theorem \ref{scatnon1}.

\end{proof}
\noindent
Our second main result, Theorem \ref{scatnon2}, which applies to less regular refractive index $n$ is proven in the exact same manner. The regularity result of  Proposition \ref{regs} and  Proposition \ref{prep2} are still applicable, since we have assumed that $n\in C^{1,1}(\overline D_\delta)$. For the free boundary regularity we rely on the case 2 of Theorem \ref{th1}. 

\vskip 10pt
\noindent
We close this section with a remark on up-to-the-boundary regularity of the $v$-part of the transmission eigenfunction. At a real transmission eigenvalue $k>0$,  there exist nonzero $u\in H^2_0(D)$ and $v\in L^2(D)$, which solve
\begin{eqnarray}
&\Delta u+k^2nu=k^2(1-n) v &  \; \mbox {in}\; D \label{t1}\\
&\Delta v+k^2v=0 &  \; \mbox {in}\; D \label{t2}
\end{eqnarray}
Without loss of generality, we may assume that the eigenfunction $(u,v)$ is real valued. In general, since $v$  assumes no boundary condition, it is not possible from the equations to conclude any regularity for $v$ up to the boundary.  Our free boundary regularity results provide some insight into this issue.  Recall that  Theorem \ref{scatnon1}  and Theorem   \ref{scatnon2}  state necessary regularity conditions on $\partial D$, in order that (\ref{t1}) can have a $H_0^2(D)$ solution ($v$ being defined and regular in a $\mathbb{R}^m$ neighborhood of $\partial D$).  It is clear from our analysis that the statements of Theorem \ref{scatnon1}  and Theorem   \ref{scatnon2}   are valid if $v$ is only defined on one side of $\partial D$, and  the regularity of $v$ up to the boundary matches that of $n$; simply notice that  our arguments rely only on the local regularity of the source term $(1-n)v$ in ${\overline D}\cap B_R(x_0)$. We thus have  the following cosequence of the proofs of Theorem \ref{scatnon1}  and Theorem \ref{scatnon2}.
\noindent
\begin{corollary}\label{cor1}
Assume $k>0$ is a real transmission eigenvalue with eigenfunction $(u, v)$,  $\partial D$ is Lipshitz, $n\in L^\infty(D)$,  and there exits $x_0\in \partial D$ such that $n(x_0)-1\neq 0$. The following assertions hold:
\begin{enumerate}
\item  If $n$ is real analytic in a neighborhood of $x_0$  and $\partial D\cap B_r(x_0)$ is not real analytic for any ball $B_r(x_0)$, then $v$ can not be real analytic  in any neighborhood of $x_0$, unless $v(x_0)=0$.
\item If $n\in C^{m, \mu}(\overline{D}\cap B_R(x_0))\cap C^{1,1}(\overline{D}\cap B_R(x_0))$ for $m \geq 1$, $0<\mu<1$ and some ball $B_R(x_0)$, and  $\partial D\cap B_r(x_0)$  is not of class $C^{m+1, \mu}$ for any ball $B_r(x_0)$, then $v$ cannot lie  in $C^{m, \mu}(\overline{D}\cap B_r(x_0))\cap C^{1,1}(\overline{D}\cap B_r(x_0))$ for any ball $B_r(x_0)$, unless $v(x_0)=0$.
\end{enumerate}
\end{corollary}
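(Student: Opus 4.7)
The plan is to run the proofs of Theorems \ref{scatnon1} and \ref{scatnon2} in contrapositive form, being careful that every step uses only the restriction of $n$ and $v$ to $\overline{D}\cap B_r(x_0)$, not an ambient $\mathbb{R}^m$-definition of $v$. I argue by contradiction: assume $v(x_0)\ne 0$ and, in case (1), that $v$ is real analytic on $\overline{D}\cap B_r(x_0)$ for some $r>0$, or in case (2), that $v\in C^{m,\mu}(\overline{D}\cap B_r(x_0))\cap C^{1,1}(\overline{D}\cap B_r(x_0))$. Since $n(x_0)-1\ne 0$ and at least one of $\Re(v(x_0))$, $\Im(v(x_0))$ is nonzero, after possibly replacing $(u,v)$ by $-(u,v)$ and choosing the appropriate real or imaginary part (say $w:=\Re(u)$), I may shrink $r$ so that $(n-1)\Re(v)\ge\gamma>0$ on $\overline{D}\cap B_r(x_0)$. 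Then $w\in H^2_0(D)$ solves $\Delta w+k^2 n w=-k^2(n-1)\Re(v)$ in $D$ with $w=\frac{\partial w}{\partial \nu}=0$ on $\partial D$.

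The next step is to re-trace the regularity cascade from Section \ref{proof}. Proposition \ref{regs}, which requires only $n\in C^\alpha$ locally near $x_0$, yields $w\in C^{1,1}(\overline{D}\cap B_{r'}(x_0))$ together with a $C^1$ extension of $w$ to $\mathbb{R}^m$; Proposition \ref{prep2}, which requires only $n\in C^{1,1}$ near $x_0$ and the sign condition at $x_0$, then gives $w\le 0$ on $\overline{D}\cap B_{r'}(x_0)$. Setting $g:=-k^2(nw+(n-1)\Re(v))$, the combined regularity of $w$, $n$ and $v$ on the $D$-side of $B_{r'}(x_0)$ produces a $C^1$-extension $g^*$ of $g$ with $g^*\le-\tilde\gamma<0$ on $\overline{D}\cap B_{r'}(x_0)$, and Theorem \ref{th2} delivers $\partial D\cap B_{r''}(x_0)\in C^1$ and $w\in C^2(\overline{D}\cap B_{r''}(x_0))$.

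Finally, I apply Theorem \ref{th1} to $\Delta w+a w=b$ with $a(x)=k^2 n(x)$ and $b(x)=k^2(1-n(x))\Re(v(x))$, both nonvanishing at $x_0$. In case (1), the hypothesized analyticity of $n$ and $v$ on $\overline{D}\cap B_r(x_0)$ makes $a$ and $b$ real analytic there, so part 3 of Theorem \ref{th1} forces $\partial D\cap B_{r'''}(x_0)$ to be real analytic, contradicting the assumption. In case (2), the assumed $C^{m,\mu}$ regularity of $n$ and of $v$ makes $a,b\in C^{m,\mu}(\overline{D}\cap B_{r''}(x_0))$, so part 2 of Theorem \ref{th1} forces $\partial D\cap B_{r'''}(x_0)\in C^{m+1,\mu}$, again a contradiction.

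The main obstacle (and, really, the only nontrivial point) is the bookkeeping that makes the argument work with $v$ defined only on $\overline{D}$: one must verify at every stage that the invoked result depends solely on the restriction of $n$ and $v$ to $\overline{D}\cap B_r(x_0)$. This is transparent for Propositions \ref{regs} and \ref{prep2} and for Theorem \ref{th2} from their statements, and for Theorem \ref{th1} it is exactly the content of Remark \ref{rem1}. Once this localization is recorded, Corollary \ref{cor1} is essentially an immediate re-reading of the proofs of Theorems \ref{scatnon1} and \ref{scatnon2} rather than a new argument.
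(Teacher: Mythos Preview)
Your proposal is correct and follows exactly the approach the paper intends: the paper does not give a separate proof of Corollary~\ref{cor1} but simply states that ``our arguments rely only on the local regularity of the source term $(1-n)v$ in $\overline{D}\cap B_R(x_0)$,'' and you have carefully unpacked this remark by re-running the chain Proposition~\ref{regs} $\to$ Proposition~\ref{prep2} $\to$ Theorem~\ref{th2} $\to$ Theorem~\ref{th1} while checking that each step needs only one-sided regularity of $v$. Your explicit bookkeeping (and invocation of Remark~\ref{rem1}) is precisely the content the paper leaves implicit.
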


\section{Applications to special incident waves}\label{appl}
\label{specinc}
In this section we describe some applications of our main results to broad classes of incident waves. To illustrate our results from a different perspective we shall formulate these applications in terms of the boundary regularity of $D$ implied by a lack of scattering.
For an incident plane wave
$$
v_\xi(x)= e^{ik\xi\cdot x}~,
$$
and an index of refraction $n$ with the property that $n(x) \ne 1$ for all $x\in \partial D$, one has
$$
k^2(1-n(x))v_\xi(x) \ne 0~, ~~x \in \partial D~, ~~ k \ne 0~,
$$
and so the non-degeneracy condition of our main results is satisfied for all $x\in \partial D$. As a consequence
we have the following corollary to Theorem \ref{scatnon1}
\begin{corollary} \label{planew}
Suppose $\partial D$ is Lipshitz. Suppose the index of refraction $n\in L^\infty(D)$, $n(x)\ge n_0>0$, is real analytic in $\overline{D_\delta}$, with $n(x)\ne 1$ for all $x\in \partial D$. 
Let $u\in H^2_{loc}(\R^m)$ denote the solution to the problem (\ref{total})-(\ref{somer}), given the incident plane wave $v_\xi(x)= e^{ik\xi\cdot x}$, $\xi \in S^{m-1}$. If $k> 0$, and if $u$ vanishes identically in $\R^m\setminus \overline{D}$, then the boundary $\partial D$ is real analytic.
\end{corollary}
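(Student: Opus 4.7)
The plan is that this corollary follows directly from Theorem \ref{scatnon1} applied pointwise along $\partial D$, once the non-degeneracy hypothesis is verified at every boundary point. The nontrivial analytic content is entirely packaged into Theorem \ref{scatnon1}; here the work is purely one of reduction and bookkeeping.

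First I would reduce the statement to a non-scattering scenario in the sense of (\ref{nsp}). Since $u \in H^2_{loc}(\R^m)$ vanishes identically in $\R^m \setminus \overline D$, the traces of $u$ and $\partial u/\partial \nu$ from inside $D$ must also vanish, so $u|_D \in H^2_0(D)$. The exterior equation in (\ref{total}) is trivially satisfied, and in $D$ the equation becomes exactly (\ref{nsp}) with $v = v_\xi$. Thus we are precisely in the hypothesis of Theorem \ref{scatnon1}: there exists an $H^2_0(D)$ solution of (\ref{nsp}) for the incident wave $v_\xi$.

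Next I would verify the non-degeneracy at every point of $\partial D$. The plane wave $v_\xi(x) = e^{ik\xi \cdot x}$ is a real-analytic, entire solution of $\Delta v + k^2 v = 0$ on $\R^m$, so (\ref{incB}) is satisfied. Crucially $|v_\xi(x)| \equiv 1$, hence $v_\xi$ vanishes nowhere, and combined with the standing assumption $n(x) \ne 1$ on $\partial D$ one obtains
\[
(n(x_0) - 1)\, v_\xi(x_0) \ne 0 \quad \text{for every } x_0 \in \partial D.
\]
The remaining hypotheses of Theorem \ref{scatnon1}, namely Lipschitz regularity of $\partial D$, $n \in L^\infty(D)$ with $n \ge n_0 > 0$, and real analyticity of $n$ on $\overline{D_\delta}$, are all part of the corollary's assumptions.

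Finally I would conclude by contradiction. Fix an arbitrary $x_0 \in \partial D$ and suppose $\partial D \cap B_r(x_0)$ fails to be real analytic for every $r > 0$. All hypotheses of Theorem \ref{scatnon1} would then be in force, and the theorem would yield that $v_\xi$ is scattered by $(D,n)$, contradicting the reduction carried out in the first paragraph. Therefore for each $x_0 \in \partial D$ there exists some $r = r(x_0) > 0$ such that $\partial D \cap B_r(x_0)$ is real analytic; covering the compact set $\partial D$ by such balls yields real analyticity of $\partial D$ globally. The only point requiring care -- and what I would view as the sole potential obstacle -- is verifying that the ``non-scattering'' stated as $u \equiv 0$ outside $\overline D$ really is the same condition driving Theorem \ref{scatnon1}; this is handled by the zero-trace argument for $u|_D \in H^2_0(D)$ above.
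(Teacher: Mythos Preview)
Your proposal is correct and follows essentially the same route as the paper: verify that $|v_\xi|\equiv 1$ together with $n\ne 1$ on $\partial D$ gives the non-degeneracy condition $(n(x_0)-1)v_\xi(x_0)\ne 0$ at every boundary point, then apply Theorem \ref{scatnon1} (in contrapositive form) pointwise along $\partial D$. The paper states this in two sentences immediately before the corollary; your reduction of ``$u\equiv 0$ outside $\overline D$'' to ``$u|_D\in H^2_0(D)$ solves (\ref{nsp})'' simply makes explicit what the paper already set up around (\ref{nsp})--(\ref{te2}).
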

\begin{remark}
{\em We have two remarks related to  Corollary \ref{planew}:
\begin{enumerate}
\item Let $\Phi_k$ denote the fundamental solution to the Helmholtz equation give by
$$
\Phi_k(x,y):=\left\{\begin{array}{rrcll}\displaystyle{\frac{e^{ik|x-y|}}{4\pi |x-y|}} \quad \; & \qquad  \mbox{in }\, {\mathbb R}^3 \\
& \\
\displaystyle{\frac{i}{4}}\displaystyle{H^{(1)}_0(k|x-y|)} & \qquad \mbox{in }\, {\mathbb R}^2 ~.
\end{array}\right.~,
$$
where $H^{(1)}_0$ denotes the Hankel function of the first kind of order zero.
The result in Corollary \ref{planew} can be stated verbatim if the incident field $v$ is a point source, i.e.,
$$v_{z_0}(x):=\Phi_k(x,z_0) \qquad z_0 \in \mathbb{R}^m\setminus \overline{D}~ ,$$
since $v_{z_0}(x)\neq 0$ for all $x\in \partial D$.
\item If the refractive index $n\in C^{m, \mu}(\overline D_\delta)\cap C^{1,1}(\overline D_\delta) $, for $m\geq 1$, $0<\mu<1$ and  under the same additional assumptions  one can conclude that if $u$ vanishes identically in $\R^m\setminus \overline{D}$, then the boundary $\partial D$ is of class $C^{m+1, \mu}$.
\end{enumerate}
}
\end{remark}
\noindent 
A natural question arises concerning the possible appearance of non-scattering for plane waves and obstacles with real analytic boundaries as well as real analytic index of refraction. A result in that direction is found in \cite{vx}. In that paper it is shown that if $D\subset \R^2$ is strictly convex (positive curvature) and with constant index of refraction, then given any direction $\xi$ there exists at most finitely many positive wave-numbers $k$ for which the plane wave in the direction $\xi$ does not scatter. If $D$ is a disk it is quite easy to see that a plane wave will scatter at any positive wave-number.

\vskip 10pt
\noindent
For the next application we consider the  two dimensional case, and incident waves obtained by superposition of plane waves, so-called Herglotz wave functions, of the form
\begin{equation}\label{Hergdef}
v_\phi(x)= \frac{1}{2\pi}\int_{S^1} \phi(\xi) e^{ik\xi\cdot x}~ds_\xi~,
\end{equation}
where we take $\phi$ to be a $C^1$ function. While the free boundary regularity result in Theorem \ref{scatnon1} does not insure that non-scattering for such incident waves can only occur for obstacles with real analytic boundaries, it does imply that (for real analytic internal index of refraction) infinitely many non-scattering positive wave-numbers can only occur if the boundary of the obstacle is real analytic, except possibly at a nowhere dense (rare) set of points. To make this statement precise we introduce the real analytic ``part" of the boundary
$$
\partial D_A = \{ x\in \partial D~:~ \partial D \hbox{ is real analytic in a neighborhood of } x \}~~.
$$

\begin{corollary}\label{Hwave}
Suppose $\partial D$, the boundary of the domain $D\subset \mathbb{R}^2$, is Lipschitz. Suppose the index of refraction $n\in L^\infty(D)$, $n(x)\ge n_0>0$, is real analytic on $\overline{D_\delta}$, with $n(x)\ne 1$ for all $x\in \partial D$. For fixed $K_0$ and $\epsilon_0>0$, let $\Phi$ denote the set
$$
\Phi = \left\{  \phi \in C^1(S^1)~~,~ \Vert \phi \Vert_{C^1} \le K_0 ~~,~ \hbox{and } |\phi|>\epsilon_0 \hbox{ on } S^1~\right\}~,
$$ 
and let $u_\phi \in H^2_{loc}(\R^m)$ denote the solution to the scattering problem (\ref{total})-(\ref{somer}), given an incident Herglotz wave  function $v_\phi$ of the form (\ref{Hergdef}). Suppose there exists an infinite sequence of positive wave-numbers $k_j$ and associated scattering solutions $u_{\phi_j}$, with $\phi_j\in \Phi$, for which 
$$
u_{\phi_j} \hbox{ vanishes identically in } \R^2\setminus \overline{D}~,
$$
then $\partial D_A$ is a dense open set, or equivalently: the complement of $\partial D_A$ is a closed nowhere dense set.
\end{corollary}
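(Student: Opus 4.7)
\medskip
\noindent\textbf{Proof proposal.} My plan is to argue by contradiction, exploiting only a single term from the assumed infinite sequence; once one non-scattering pair $(k_j,\phi_j)$ is in hand, the conclusion will follow from the real analytic structure of the nodal set of the associated Herglotz wave function.

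First I would note that $\partial D_A$ is open in $\partial D$ directly from its definition, so only density remains to be verified. Suppose to the contrary that there exist $\tilde x_0 \in \partial D$ and $r>0$ with $\partial D_A \cap B_r(\tilde x_0)=\emptyset$; then no point of $\partial D\cap B_r(\tilde x_0)$ admits a real analytic neighborhood in $\partial D$. Fix any single index $j$; the hypothesis that $u_{\phi_j}$ vanishes identically in $\mathbb{R}^2\setminus\overline D$ is precisely non-scattering for $(D,n)$ at wavenumber $k_j$. Applying the contrapositive of Theorem~\ref{scatnon1} at each $y\in \partial D\cap B_r(\tilde x_0)$ (with $x_0$ in that theorem played by $y$), using real analyticity of $n$ on $\overline{D_\delta}$ and the failure of real analyticity of $\partial D$ at $y$, we conclude $(n(y)-1)v_{\phi_j}(y)=0$; combined with the hypothesis $n\ne 1$ on $\partial D$, this yields
$$v_{\phi_j}(y)=0 \qquad \text{for every } y\in \partial D\cap B_r(\tilde x_0).$$

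The second step is to use that $v_{\phi_j}$ is a non-trivial real analytic solution of $\Delta v + k_j^2 v=0$ in $\mathbb{R}^2$; non-triviality comes from $|\phi_j|>\epsilon_0$ on $S^1$ together with injectivity of the Herglotz transform (an elementary consequence, via Jacobi--Anger, of the non-vanishing of the Bessel functions $J_n(k_j r)$ for suitable $r$). I would then invoke the standard local structure of nodal sets of Helmholtz solutions in two dimensions: near any zero, the lowest-order homogeneous piece of the Taylor expansion of $v_{\phi_j}$ is a non-trivial harmonic polynomial of some degree $m\ge 1$, so locally the zero set is a union of $m$ smooth real analytic arcs meeting at that point; moreover the critical zeros (where also $\nabla v_{\phi_j}=0$) form a discrete set, since if they were not discrete, a short inductive argument in adapted real analytic coordinates (write $v_{\phi_j}=t^2 w$, substitute into the Helmholtz equation and iterate) would force $v_{\phi_j}$ to vanish to infinite order on a smooth arc and hence to vanish identically. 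Accepting this, at every $p\in \partial D\cap B_r(\tilde x_0)$ that is not one of these isolated critical zeros --- a dense subset of $\partial D\cap B_r(\tilde x_0)$ --- the nodal set of $v_{\phi_j}$ near $p$ is a single smooth real analytic arc $\gamma$. Since a local Lipschitz-graph parametrization for $\partial D$ through $p$ is a continuously injectively embedded topological $1$-manifold whose image lies inside $\gamma$, standard one-dimensional topology forces $\partial D$ to coincide with $\gamma$ in a neighborhood of $p$. Therefore $\partial D$ is real analytic at $p$, i.e.\ $p\in \partial D_A$, contradicting $\partial D_A\cap B_r(\tilde x_0)=\emptyset$.

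The main obstacle I anticipate is cleanly recording the nodal structure theorem for Helmholtz solutions in two dimensions (the only place where real PDE content, beyond Theorem~\ref{scatnon1}, enters); the remaining steps are bookkeeping. I would also note that the hypothesis of an \emph{infinite} sequence of non-scattering wavenumbers plays no essential role in this particular argument: a single non-scattering pair $(k_j,\phi_j)$ with $k_j>0$ and $\phi_j\in \Phi$ already suffices for density of $\partial D_A$, and no extraction of limits in either $k_j$ or $\phi_j$ is required.
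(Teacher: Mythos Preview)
Your argument is correct and takes a genuinely different route from the paper's. The paper exhausts two cases at a given boundary point $x^*$: either $|x|$ is locally constant (so $\partial D$ is a circular arc), or the image of $|x|$ contains an interval. In the second case, the paper assumes \emph{all} the $v_{\phi_j}$ vanish on a fixed boundary neighborhood, uses the sign condition on $n-1$ to deduce that the non-scattering wavenumbers $k_j$ are transmission eigenvalues with $k_j\to\infty$, applies stationary phase to the Herglotz integrals, extracts a $C^0$-limit $\phi$ of the $\phi_j$, and finally invokes an algebraic lemma (Lemma~6.1 from \cite{pom1}) about the behavior of $e^{2ik_j|z|}$ along subsequences to force $\phi$ to vanish at a point, contradicting $|\phi|>\epsilon_0$.

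By contrast, you use a single pair $(k_j,\phi_j)$ and the local structure of the nodal set of a two-dimensional Helmholtz solution: once $v_{\phi_j}$ vanishes on a boundary arc, you locate a regular nodal point and identify the Lipschitz arc with the real analytic nodal curve there. This is more elementary, avoids the discreteness of transmission eigenvalues, the stationary phase expansion, the $C^1$-compactness of $\Phi$, and the algebraic lemma; as you correctly note, it actually yields the stronger conclusion that a \emph{single} non-scattering pair suffices. The price is that you import the classical $2$D nodal-set theory for real-analytic elliptic solutions, which is external to the free boundary machinery the paper develops; the paper's proof keeps within the circle of ideas (Herglotz asymptotics, the algebraic lemma from \cite{pom1}) already present in the authors' earlier work.

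Two minor points worth tightening in a final write-up: your inductive sketch for the discreteness of critical zeros implicitly uses that a non-discrete real analytic set in $\mathbb{R}^2$ contains a smooth real analytic arc (curve selection / \L ojasiewicz); alternatively one can simply invoke Holmgren uniqueness on that arc. And the ``standard one-dimensional topology'' step is justified by noting that the Lipschitz graph parametrization composed with the inverse of the nodal-curve parametrization is a continuous injection of intervals, hence open, so the two arcs coincide near the point.
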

\begin{proof}
The set $\partial D_A$ is clearly an open subset of $\partial D$.
Let $x^*$ be an arbitrary point on $\partial D$, we proceed to show any neighborhood of $x^*$ contains a point from $\partial D_A$. For this purpose we may without loss of generality assume $x^*\ne0$ (since a set which is dense in $\partial D$ minus a point is also dense in $\partial D$).  We analyze two exhaustive, but mutually exclusive possibilities
\begin{itemize}
\item The map $x \rightarrow |x|$ is constant in a neighborhood of $x^*$.
\vskip 5pt
\item For any $\partial D$ neighborhood $\omega$ of $x^*$, the image of $\omega$ under the map $x \rightarrow |x|$ contains an open interval. 
\end{itemize}
In the first case the boundary $\partial D$ is part of a circle near $x^*$, therefore locally real analytic, and so $x^*$ itself lies in $\partial D_A$. In order to deduce that any neighborhood of  $x^*$ also contains a point from $\partial D_A$ in the second case, we shall apply the result from Theorem \ref{scatnon1}. Now suppose any neighborhood of $x^*$ contains a point $z$ with 
$$
v_{\phi_j}(z)=\frac{1}{2\pi}\int_{S^1} \phi_{j}(\xi) e^{ik_{j}\xi\cdot z}~ds_\xi \ne 0
$$ 
for some $j$. Since $v_{\phi_j}$ is not scattered, it follows from Theorem \ref{scatnon1} that $\partial D$ is analytic near the point $z$, and so it follows that any neighborhood of $x^*$ contains a point from $\partial D_A$. We are thus left to consider the second case when we also know that there exists a neighborhood $\omega$ of $x^*$  such that 
\begin{equation}
\label{oscint}
\frac{1}{2\pi}\int_{S^1} \phi_{j}(\xi) e^{ik_{j}\xi\cdot z}~ds_\xi = 0~, ~~\hbox{ for all } j \hbox{ and for all }z \in \omega~.
\end{equation}
By decreasing $\omega$, if necessary, we may assume $|z|>c>0$ in $\omega$.
In the following we show, by contradiction,  that this situation is vacuous, and having done so, we may conclude that any neighborhood of $x^*$ contains a point from $\partial D_A$ also in case two. This will complete the proof that $\partial D_A$ is dense in $\partial D$. Now to establish the contradiction:
note that wave numbers associated with non-scattering (nontrivial) incident waves are automatically transmission eigenvalues. Because $n(x)\ne 1$ on $\partial D$, and because of the regularity of $n$ near $\partial D$, it follows that either (1) $\min_{x\in D_\delta} n(x)>1$, or (2) $\max_{x\in D_\delta}n(x)<1$, for some $\delta$ sufficiently small.  Due to this "sign" condition on $n$ it is known that the only accumulation point of the transmission eigenvalues is at $\infty$, in other words, we know that $k_j \rightarrow \infty$ as $j \rightarrow \infty$. The stationary phase approximation to integrals such as that in (\ref{oscint}) asserts that, for any $\phi \in C^1$,
\begin{eqnarray*}
\frac{1}{2\pi}\int_{S^1} \phi(\xi) e^{ik_{j}\xi \cdot z}~ds_\xi &=& \frac{1}{2\pi}\int_{-\pi}^\pi \phi(\theta) e^{ik_{j}|z| cos (\theta-\theta_z)}~d\theta \\
&=&\frac1{2\pi}\phi(\theta_z)e^{ik_{j}|z|}e^{-i\pi/4}\left(\frac{2\pi}{k_j |z|}\right)^{1/2} \\
&&\hskip 20pt +\frac1{2\pi}\phi(\theta_z+\pi)e^{-ik_{j}|z|}e^{i\pi/4}\left(\frac{2\pi}{k_j|z|}\right)^{1/2} +o(k_j^{-1/2})~,
\end{eqnarray*}
for any $z \in \partial D$, $z \ne 0$. Here we have parametrized $\xi\in S^1$ by angle $\theta \in (-\pi,\pi)$: $\xi = (\cos \theta,\sin \theta)$, and interpreted $\phi$ as a periodic function of $\theta$. We have also  written $z= |z| (\cos \theta_z,\sin \theta_z)$. Furthermore the remainder term $o(k_j^{-1/2})$ is uniform over $\Vert \phi \Vert_{C^1} \le K$, (and $|z|>c>0$). By insertion of $\phi=\phi_j \in \Phi$, use of (\ref{oscint}), and  rearrangement we now get
\begin{equation} \label{firstlim}
\phi_j(\theta_z)e^{2ik_{j}|z|}+i \phi_j(\theta_z+\pi) \rightarrow 0 \hbox{ as } j \rightarrow \infty~,
\end{equation}
for any $z \in \omega$. Since the $\phi_j$ are bounded in $C^1$, we may extract a subsequence (for simplicity, still indexed by $j$) that converges to some $\phi$ in $C^0$; this limit $\phi$ also satisfies $|\phi(\theta)|>\epsilon_0 $ for all $\theta$. From the limiting statement (\ref{firstlim}) we conclude that
$$
\phi(\theta_z)e^{2ik_{j}|z|}+i \phi(\theta_z+\pi) \rightarrow 0 \hbox{ as } j \rightarrow \infty~,
$$
for any $z \in \omega$. Since the image under the map $z \rightarrow |z|$ of any ($\partial D$) neighborhood of $x^*$ contains an open interval, Lemma 6.1 in \cite{pom1} asserts that there exists points $z_1$ and $z_2$ in $\omega$ so (after extraction of a subsequence)
$$
e^{2ik_j |z_1|} \rightarrow 1 ~\hbox{ and } ~ e^{2ik_j |z_2|} \rightarrow -1 ~.
$$
For the convenience of the reader we have included the statement of this lemma in Appendix \ref{ap2}. It thus follows that 
$$
\phi(\theta_{z_1})+i \phi(\theta_{z_1}+\pi) =0 ~\hbox{ and } -\phi(\theta_{z_2})+i \phi(\theta_{z_2}+\pi) =0~. 
$$
Since the above argument remains valid when we decrease the $x^*$ neighborhood $\omega$, we may achieve that both $z_1$ and $z_2$ are arbitrarily close to $x^*$. Therefore, by continuity
$$
\phi(\theta_{x^*})+i \phi(\theta_{x^*}+\pi) =0 ~\hbox{ and } -\phi(\theta_{x^*})+i \phi(\theta_{x^*}+\pi) =0~. 
$$
or
$$
\phi(\theta_{x^*})=\phi(\theta_{x^*}+\pi)=0~, 
$$
in contradiction to the fact that $|\phi(\theta)|$ is always positive.
\end{proof}

\noindent
For inhomogeneities  with real analytic boundaries some recent results about the number of positive non-scattering wave-numbers associated to incident Herglotz wave function (and constant index of refraction $\ne 1$) are found in \cite{vx}. For a circle there are infinitely many such wave-numbers associated to each density $\phi_j(\theta) = e^{\pm ij \theta}$. However, when the circle is perturbed (ever so slightly) to an ellipse, there can at most be finitely many such wave-numbers associated to any fixed $\phi$ (or any compact class of $\phi's$). The finiteness remains stable to perturbations of the ellipse. 

\medskip 
\noindent
We close this section with applications of our main results by establishing a scattering result for an inhomogeneous media $(D,n)$ at a wave number $k>0$, for which $k^2$ is not a Dirichlet eigenvalue of the negative Laplacian in $D\subset {\mathbb R}^m$. 
\begin{corollary}\label{dir}
Suppose $\partial D$ is Lipschitz. Suppose $k^2>0$  is not a Dirichlet eigenvalue of $-\Delta$ in $D$,  and that the index of refraction $n \in L^\infty(D)$ is real analytic on $\overline{D_\delta}$, with $n(x)\ne 1$ for all $x\in \partial D$. Furthermore, assume that $\partial D_A$ is the empty set. Then every incident wave  $v$ is scattered by this inhomogeneity.
\end{corollary}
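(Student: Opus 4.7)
My plan is to argue by contradiction: suppose some nontrivial incident wave $v$ is not scattered by $(D,n)$. Then by definition there exists $u\in H^2_0(D)$ solving (\ref{nsp}) in $D$, with $v$ real analytic on $\overline D$ (it satisfies the Helmholtz equation (\ref{incB}) in a neighborhood of $\overline D$).

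The first step is to apply Theorem \ref{scatnon1} pointwise along $\partial D$. Pick any $x_0\in\partial D$. Since $\partial D_A=\emptyset$, the boundary $\partial D$ fails to be real analytic in every neighborhood of $x_0$. The hypotheses of Theorem \ref{scatnon1} on $n$ (real analytic on $\overline{D_\delta}$) are satisfied, and $u\in H^2_0(D)$ solves (\ref{nsp}). If we also had $(n(x_0)-1)v(x_0)\neq 0$, Theorem \ref{scatnon1} would force $v$ to be scattered, contradicting the assumption. Hence $(n(x_0)-1)v(x_0)=0$ for every $x_0\in\partial D$. Because $n(x)\neq 1$ everywhere on $\partial D$, this forces $v(x_0)=0$ for all $x_0\in\partial D$, i.e., $v\equiv 0$ on $\partial D$.

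The second step is to promote this boundary vanishing to $v\equiv 0$ inside $D$. Since $v$ satisfies $\Delta v+k^2v=0$ in $D$ and is real analytic up to $\partial D$, $v$ is an $H^1(D)$ (in fact classical) solution of the Dirichlet problem
\begin{equation*}
\Delta v+k^2 v=0\ \text{in}\ D,\qquad v=0\ \text{on}\ \partial D.
\end{equation*}
By hypothesis $k^2$ is not a Dirichlet eigenvalue of $-\Delta$ on $D$, so the only solution is $v\equiv 0$ in $D$. Since $v$ is a real analytic solution of the Helmholtz equation on a connected open set containing $\overline D$ (outside at most a measure-zero exceptional set), the unique continuation property implies $v\equiv 0$ on the entire connected component of its domain of analyticity containing $D$. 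This contradicts the assumption that $v$ is a nontrivial incident wave, completing the proof.

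There is essentially no obstacle here beyond the proper application of Theorem \ref{scatnon1}: the corollary is a direct reduction. The only delicate point is making sure the boundary vanishing of $v$ (deduced pointwise from the non-degeneracy dichotomy) has enough regularity to be used as a Dirichlet datum; this is immediate from the real analyticity of $v$ on $\overline D$, which is guaranteed by interior real analyticity of Helmholtz solutions together with $v$ being defined in a neighborhood of $\overline D$.
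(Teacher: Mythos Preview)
Your proof is correct and follows essentially the same approach as the paper's own proof, just organized as a contrapositive: the paper argues directly that since $k^2$ is not a Dirichlet eigenvalue, any nontrivial $v$ must be nonzero on some open subset of $\partial D$, and then applies Theorem~\ref{scatnon1} at such a point; you instead assume non-scattering, use Theorem~\ref{scatnon1} pointwise to force $v|_{\partial D}=0$, and then invoke the Dirichlet eigenvalue hypothesis to reach $v\equiv 0$. The logical content is identical.
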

\begin{proof}
$v$ is a (real-) analytic solution of the Helmhotz equation, $\Delta v+k^2v=0$, in a  region  containing ${\overline D}$. Since $k^2$ is not a Dirichlet eigenvalue, there is an open subset  ${\mathcal O}\subset \partial D$ of the boundary where $v$ does not vanish. In particular there exists a point $x_0\in {\mathcal O}\subset \partial D$ where the assumptions of Theorem \ref{scatnon1} are satisfied, and thus $v$ produces a non-zero scattered field.
\end{proof}

\noindent
If $k>0$ is not a transmission eigenvalue, we know that the inhomogeneity always scatters, hence the statement of Corollary \ref{dir} asserts that, non-scattering (with $\partial D_A =\emptyset$) can only occur for wave numbers $k>0$, that are transmission eigenvalues, and for which $k^2$ is a Dirichlet eigenvalue for $-\Delta$ in $D$. The cardinality of this set is not known.

\section{Remarks on non-radiating sources}
Our analysis has some implications for the scattering problem given a compactly supported source. More specifically, the scattered field due to  a given source $f\in L^2_c({\mathbb R}^m)$  of compact support  satisfies 
\begin{equation}\label{sourse}
\Delta u+k^2u=f \qquad \mbox{in}\; {\mathbb R}^m
\end{equation}
together with the outgoing Sommerfeld radiation condition (\ref{somer}). Again the outgoing scattered field $u$ exhibits the  following asymptotic behavior  as $r:=|x|\to \infty$
$$
u(x)=\frac{\exp(ikr)}{r^{\frac{m-1}{2}}}u^\infty(\hx)+O\left(r^{-\frac{m+1}{2}}\right)~,
$$
which defines the far field pattern $u^\infty(\hx)$ as a function on the unit $m-1$ sphere. There are plenty of compactly supported sources that produce zero far field patterns. For instance,  the set 
$$\left\{f:=\Delta v+k^2v, \quad \mbox{for any function $v\in C^\infty_c(\mathbb{R}^m)$ }\right\}$$
consists of so-called non-radiating sources. A non-radiating source of this type has the property that $f$ vanishes on the boundary of its support (which may have singularities).  Our analysis, on the other hand can be used to determine necessary local regularity properties for the boundary of the support of a non-radiating source, provided it satisfies a non-vanishing condition. The analysis leading to Theorem \ref{scatnon1} and Theorem \ref{scatnon2} implies the following results for the source problem (\ref{sourse}). 

\begin{theorem}\label{sonon1} Assume that $f=0$ in $\mathbb{R}^m \setminus \overline{D}$, $f|_{D} \in L^\infty(D)$ and that the  boundary $\partial D$ is Lipschitz. Suppose there exists $x_0\in\partial D$ such that $f(x_0)\neq 0$, and $f$ is real analytic in $\overline{D}\cap B_R(x_0)$ for some ball  $B_R(x_0)$ centered at $x_0$ of fixed radius $R>0$,  and  furthermore suppose  $\partial D\cap B_r(x_0)$ is not real analytic for any $r>0$.  Then the source $f$ radiates.
\end{theorem}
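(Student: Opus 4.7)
The plan is to adapt the proof of Theorem \ref{scatnon1} almost verbatim, exploiting the fact that after reducing to $D$ the source problem has exactly the same structural form. First I would argue by contradiction: suppose $u^\infty\equiv 0$. By Rellich's lemma, $u$ vanishes identically outside $\overline D$, and since $u\in H^2_{\mathrm{loc}}(\mathbb{R}^m)$ with $u\equiv 0$ in $\mathbb{R}^m\setminus\overline D$, the traces of $u$ and $\partial u/\partial\nu$ on $\partial D$ vanish, so $u\in H^2_0(D)$. Restricted to $D$, equation (\ref{sourse}) becomes
$$
\Delta u+k^2 u = f \quad\text{in } D,\qquad u=\partial u/\partial\nu = 0 \ \text{on } \partial D.
$$
Since $f(x_0)\neq 0$, at least one of $\Re f(x_0)$, $\Im f(x_0)$ is nonzero; without loss of generality assume $\Re f(x_0)\neq 0$. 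Setting $w=\Re u$, we obtain a real-valued $w\in H^2_0(D)$ satisfying $\Delta w+k^2 w=\Re f$ in $D$.

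Next I would invoke the regularity machinery of Sections \ref{secreg} and \ref{proof}. Proposition \ref{regs} applies word-for-word with the present right-hand side, since the relevant forcing $\psi=\Re f-k^2 w$ lies in $L^\infty(D)\cap C^\alpha(B_r(x_0))$ -- $f$ is real analytic near $x_0$, and $w$ extended by zero is H\"older continuous on $\mathbb{R}^m$ by Sobolev embedding. This yields $w\in C^1(\overline D)$ with all second derivatives uniformly bounded in $D\cap B_r(x_0)$, hence $w\in C^{1,1}(\overline D\cap B_r(x_0))$ with a $C^1$ extension across $\partial D$. Proposition \ref{prep2} adapts just as smoothly: the role of $-k^2(n-1)\Re v$ is played by $\Re f$, and the non-vanishing of $\Re f(x_0)$ together with continuity of $f$ near $x_0$ gives $|\Re f|\geq \gamma>0$ in $\overline D\cap B_r(x_0)$. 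The local-coordinate computation at almost every boundary point (tangential second derivatives of $w$ vanishing in the limit by the Rademacher-type argument based on (\ref{average}), and the normal second derivative equal to $\Re f$ at the free boundary where $w$ and $\nabla w$ vanish) produces the same sign-preservation: $w$ has a definite sign in $D\cap B_r(x_0)$, opposite to that of $\Re f(x_0)$.

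With $w\in C^{1,1}(\overline D\cap B_r(x_0))$, one-signed near $x_0$, and $g:=\Re f-k^2 w$ admitting a $C^1$ extension of fixed sign near $x_0$ (because $w|_{\partial D}=0$ and $\Re f(x_0)\neq 0$), Theorem \ref{th2} applies to $w$ or $-w$, yielding $w\in C^2(\overline D\cap B_{r'}(x_0))$ and $\partial D\cap B_{r'}(x_0)$ of class $C^1$. Rewriting the equation as $\Delta w+a(x)w=b(x)$ with $a(x)\equiv k^2$ and $b(x)=\Re f(x)$, both are real analytic on $\overline D\cap B_R(x_0)$, with $a(x_0)=k^2\neq 0$ and $b(x_0)=\Re f(x_0)\neq 0$. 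Case 3 of Theorem \ref{th1} then forces $\partial D$ to be real analytic in some smaller ball about $x_0$, contradicting the hypothesis; hence $f$ must radiate.

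The only non-routine point is the second step: one must verify that the sign argument of Proposition \ref{prep2} carries over when the coefficient $k^2 n$ is replaced by the constant $k^2$ and the source $-k^2(n-1)\Re v$ is replaced by $\Re f$. Because that argument relies only on an $L^\infty$ bound for the right-hand side of the auxiliary equation for $w_{m,m}+P|x|^2$ and on a strict sign for the equation's right-hand side at the free boundary, both of which are manifestly inherited from the hypotheses $f|_D\in L^\infty(D)$ and $\Re f(x_0)\neq 0$, no new analytic difficulty arises.
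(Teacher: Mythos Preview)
Your proposal is correct and follows essentially the same route the paper indicates: the paper does not give a separate proof but simply states that ``the analysis leading to Theorem \ref{scatnon1} and Theorem \ref{scatnon2} implies'' Theorems \ref{sonon1} and \ref{sonon2}, and your argument carries out exactly that adaptation, with $a(x)=k^2$ and $b(x)=\Re f(x)$ in the framework of (\ref{fbp})--(\ref{fbp1}). One tiny imprecision: the $L^\infty$ bound on $g_{m,m}$ needed in the sign argument comes not from $f|_D\in L^\infty(D)$ but from the local real analyticity of $f$ near $x_0$ (together with the already-established boundedness of $w_{m,m}$); this is of course available under your hypotheses.
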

\noindent
In fact, for less regular sources we can prove a similar result.
\begin{theorem}\label{sonon2} Assume that $f=0$ in $\mathbb{R}^m \setminus \overline{D}$, $f|_{D} \in L^\infty(D)$ and that the  boundary $\partial D$ is Lipschitz. Suppose there exists $x_0\in\partial D$ such that $f(x_0)\neq 0$, and $f\in C^{m,\mu}(\overline{D}\cap B_R(x_0))\cap C^{1,1}(\overline{D}\cap B_R(x_0))$ for $m \geq 1$, $0<\mu<1$ for some ball  $B_R(x_0)$ centered at $x_0$ of fixed radius $R>0$,  and  furthermore suppose $\partial D\cap B_r(x_0)$ is not of class $C^{m+1,\mu}$ for any $r>0$.  Then the source $f$ radiates.
\end{theorem}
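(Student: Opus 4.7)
The plan is to follow the strategy used for Theorem \ref{scatnon2}, with the forcing term $k^2(1-n)v$ replaced throughout by $f$. Suppose for contradiction that $f$ does not radiate; then $u^\infty\equiv 0$, so by Rellich's lemma the scattered field $u$ vanishes identically in $\mathbb{R}^m\setminus\overline{D}$. Combined with $u\in H^2_{loc}(\mathbb{R}^m)$ and the Lipschitz regularity of $\partial D$, this places $u\in H^2_0(D)$, and $u$ solves $\Delta u+k^2 u=f$ in $D$ with $u=\partial u/\partial\nu=0$ on $\partial D$. Since $f(x_0)\ne 0$, either $\Re f(x_0)\ne 0$ or $\Im f(x_0)\ne 0$; I would pass to the corresponding real-valued part $w$ of $u$, which satisfies $\Delta w+k^2 w=\tilde f$ in $D$ with zero Cauchy data, where $\tilde f$ denotes the chosen real or imaginary part of $f$. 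By hypothesis $\tilde f\in C^{m,\mu}(\overline{D}\cap B_R(x_0))\cap C^{1,1}(\overline{D}\cap B_R(x_0))$ and $\tilde f(x_0)\ne 0$.

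I would then reuse Proposition \ref{regs} with $\psi:=k^2 u-f$ in place of $k^2(1-n)v-k^2 n u$. The hypotheses of that proposition need only that $\psi\in L^\infty(D)$ and that $\psi$ admits a $C^\alpha$ extension near $x_0$; this is immediate from $u\in H^2_0(D)$ (Sobolev embedding into $C^\alpha$ in dimensions $m=2,3$) together with $f\in C^{1,1}\subset C^\alpha$ locally. This yields $w\in C^{1,1}(\overline{D}\cap B_r(x_0))$, with a $C^1$ extension to all of $\mathbb{R}^m$. Next I would invoke Proposition \ref{prep2}; its proof applies with only cosmetic changes once one observes that the relevant non-tangential limit of $w_{m,m}$ at $x_0$ equals $\tilde f(x_0)$ (instead of $-k^2(n(x_0)-1)\Re v(x_0)$), since $w$ and its tangential derivatives vanish on $\partial D$. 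Consequently $w$ has a fixed sign in $D\cap B_r(x_0)$, opposite in sign to $\tilde f(x_0)$.

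With $w$ of fixed sign, set $g:=\tilde f-k^2 w$; this is in $C^1$ near $x_0$, and by continuity $g$ has the same sign as $\tilde f(x_0)\ne 0$ in a smaller ball. Caffarelli's free boundary regularity result, Theorem \ref{th2}, applied to $w$ or $-w$, then yields that $\partial D\cap B_{r'}(x_0)$ is of class $C^1$ and $w\in C^2(\overline{D}\cap B_{r'}(x_0))$. Finally I would apply case 2 of the Kinderlehrer--Nirenberg theorem, Theorem \ref{th1}, with coefficients $a(x)\equiv k^2$ (constant, hence in every regularity class, and nonzero) and $b(x)=\tilde f(x)\in C^{m,\mu}$ with $b(x_0)\ne 0$. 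The conclusion is that $\partial D\cap B_{r''}(x_0)$ is of class $C^{m+1,\mu}$, contradicting the hypothesis on $\partial D$.

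The main subtlety I expect is not in the concluding free-boundary invocations but in verifying that Propositions \ref{regs} and \ref{prep2}, which are stated for the structured forcing $k^2(1-n)v$, transfer transparently to a general source $f$. The key identifications are $f\leftrightarrow k^2(1-n)v$ in the volume-potential regularity argument and $\tilde f(x_0)\leftrightarrow -k^2(n(x_0)-1)\Re v(x_0)$ in the boundary sign computation; it is precisely the $C^{1,1}$ hypothesis on $f$ that is needed in the proof of Proposition \ref{prep2} to guarantee that the second derivatives of the right-hand side $g$ are in $L^\infty$, which is the step where that regularity is consumed.
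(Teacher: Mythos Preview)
Your proposal is correct and follows precisely the approach the paper intends: the paper gives no separate proof of Theorem \ref{sonon2}, but states that the analysis leading to Theorems \ref{scatnon1} and \ref{scatnon2} (Propositions \ref{regs} and \ref{prep2}, then Theorem \ref{th2}, then case 2 of Theorem \ref{th1}) carries over with $k^2(1-n)v$ replaced by $f$ and $a(x)=k^2n(x)$ replaced by the constant $a(x)=k^2$. One minor slip: $w$ in fact has the \emph{same} sign as $\tilde f(x_0)$ near $x_0$ (not opposite), since the non-tangential limit of $w_{\nu\nu}$ equals $\tilde f(x_0)$; this is harmless, as you correctly apply Theorem \ref{th2} to $w$ or $-w$ as needed.
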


\appendix
\section{Appendix}
\subsection{Estimation of integrals (\ref{ji}) and (\ref{jm})}\label{ap1}
Our calculations here follow almost verbatim  \cite[page 363-364]{pom3}. We include these for the convenience of the reader, but we show only the calculations for the more complicated integral (\ref{jm}). After the change of variable $y^{(m-1)}=\eta u^{(m-1)}$, $f(y^{(m-1)})=|y^{(m-1)}|g(y^{(m-1)})$ the integrand in (\ref{jm}) takes the form
\begin{equation}\label{frac}
\frac{1}{\eta^{m-1}}\frac{(U-1)\left(|u^{(m-1)}|^2+(U+1)^2\right)^{m/2}-(U+1)\left(|u^{(m-1)}|^2+(U-1)^2\right)^{m/2}}{\left[\left(|u^{(m-1)}|^2+U^2+1\right)^2-4U^2\right]^{m/2}}
\end{equation}
with $U=|u^{(m-1)}|g(\eta u^{(m-1)})$.  The denominator of the (second) fraction in (\ref{frac}) is equal to 
$$\left(|u^{(m-1)}|^2+1\right)^{m}\left\{\left[1+\frac{|u^{(m-1)}|^2}{1+|u^{(m-1)}|^2}g^2(\eta u^{(m-1)})\right]^2-\left[\frac{2|u^{(m-1)}|}{1+|u^{(m-1)}|^2}g(\eta u^{(m-1)})\right]^2\right\}^{m/2}.$$
This can be estimated from below by using the fact that for all real numbers $a$ and $b$ we have 
\begin{equation}
\label{interm}
\left[1+\frac{a^2}{1+a^2}b^2\right]^2-\left[\frac{2a}{1+a^2}b\right]^2\geq \frac{4}{b^2+4}~,
\end{equation}
with equality at $a=\pm1/\sqrt{b^2+3}$. Indeed, we apply (\ref{interm}) with  $a:=|u^{(m-1)}|$ and $b:=g(\eta u^{(m-1)})$, and notice that $g(\eta u^{(m-1)})=|g(y^{(m-1)})|\leq K$ (independently of $\eta$, for $y^{(m-1)}\in B_\rho^{(m-1)}$) since $f$ is Lipschitz with constant $K$ and $f(0)=0$. As a consequence it follows that the denominator of (\ref{frac}) is bounded below by
$$\eta^{m-1}\left(\frac{4}{K^2+4}\right)^{m/2}\left(|u^{(m-1)}|^2+1\right)^{m}$$
With $a:=|u^{(m-1)}|$ and $b:=g(\eta u^{(m-1)})$, the numerator of (\ref{frac}) reads
$$(a^2+1)^{m/2}\left[(ab-1)\left(1+\frac{a^2b^2}{1+a^2}+\frac{2ab}{1+a^2}\right)^{m/2}-(ab+1)\left(1+\frac{a^2b^2}{1+a^2}-\frac{2ab}{1+a^2}\right)^{m/2}\right]~.$$
We now define $A:=1+\frac{a^2b^2}{1+a^2}$ and $B:=\frac{2ab}{1+a^2}$, both of which are obviously uniformly bounded in $\eta$ for $y^{(m-1)} \in B_\rho^{(m-1)}$, since $|b|=|g(\eta u^{(m-1)})|\leq K$. We can now write
$$\left((A\pm B)^{m/2}-A^{m/2}\right)\left((A\pm B)^{m/2}+A^{m/2}\right)=(A\pm B)^m-A^m=B{\mathcal P}_{\pm}(A,B)$$
where ${\mathcal P}_{\pm}(A,B)$ are polynomials on $A$ and $B$ of total order $m-1$. Noting that $1\le A \le K$  and $|B| \le K$ we get 
$$(A\pm B)^{m/2}=A^{m/2}+B{\mathcal P}_{\pm}(A,B)\left((A\pm B)^{m/2}+A^{m/2}\right)^{-1}=A^{m/2}+B{\mathcal Q}_{\pm}$$
where ${\mathcal Q}_{\pm}$ are uniformly bounded in $\eta$ for $y^{(m-1)} \in B_\rho^{(m-1)}$. The numerator thus becomes
\begin{eqnarray*}
&&(a^2+1)^{m/2}\left[(ab-1)(A^{m/2}+B{\mathcal Q}_+)-(ab+1)(A^{m/2}+B{\mathcal Q}_-)\right]\\
&&\hskip 50pt =(a^2+1)^{m/2}\left[ -2 A^{m/2}-B({\mathcal Q}_++{\mathcal Q}_-)+ abB({\mathcal Q}_+ -{\mathcal Q}_-)\right]~.
\end{eqnarray*}
Since we also have $|abB|\le K$ it follows, that in terms of the original notations, the absolute value of the numerator of (\ref{frac}) is bounded by
$$C \left(|u^{(m-1)}|^2+1\right)^{m/2}~,$$
uniformly in $\eta$ for $y^{(m-1)} \in B_\rho^{(m-1)}$.
Finally, combining the above estimates we obtain
$$\mbox{integrant in (\ref{jm})} \leq \frac{C}{\eta^{m-1}} \frac{\left(|u^{(m-1)}|^2+1\right)^{m/2}}{\left(|u^{(m-1)}|^2+1\right)^m}=\frac{C}{\eta^{m-1}} \frac{1}{\left(|u^{(m-1)}|^2+1\right)^{m/2}}$$
where $C>0$ stands for some positive constant independent of $u^{(m-1)}$. The desired estimate for the integral (\ref{jm}) now follows by the change of variables $y^{(m-1)}=\eta u^{(m-1)}$.

\noindent 
The bound for the integral (\ref{ji}) can be obtained in a similar way; we leave the details to the reader.

\subsection{An algebraic lemma}\label{ap2}
Below we provide the full statement of the algebraic lemma, which was used in Section \ref{specinc}. The lemma is taken directly from \cite{pom1}, where it appears as Lemma 6.1.  We refer to \cite{pom1} for a simple proof of this lemma.
\begin{lemma}
Let $a<b$ and $L>0$ be three real numbers. Let $\{ c_n\}_{n=1}^\infty$ be a monotonically increasing sequence of positive numbers tending to infinity and starting with $1<c_1$. Let $\{\mu_n\}_{n=1}^\infty$ be a sequence of positive numbers which satisfy $c_n \mu_n<\mu_{n+1}$. Given any $t \in \mathbb{R}$ there exists a number $s~:~ a<s<b$ such that 
$$
\mu_n s \rightarrow t ~ \hbox{ modulo } L ~ \hbox{ as } n \rightarrow \infty~.
$$
\end{lemma}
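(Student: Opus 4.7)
The plan is to construct a nested sequence of closed intervals $I_n$ inside a fixed closed subinterval $[a', b'] \subset (a,b)$, along which $\mu_n s$ stays close to $t$ modulo $L$ with an approximation tolerance tending to $0$. Cantor's nested interval theorem then yields an $s$ in the intersection with the desired convergence property. The basic geometric observation is that for any $n$ and $\epsilon > 0$, the set
$$A_n(\epsilon) := \{ s \in \mathbb{R} :\; |\mu_n s - t - kL| \le \epsilon \text{ for some } k \in \mathbb{Z} \}$$
is a union of closed intervals, each of length $2\epsilon/\mu_n$, with consecutive intervals separated by distance $L/\mu_n$.

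I would set $\epsilon_n := L/c_n$, which tends to $0$ since $c_n \to \infty$. Choose $n_0$ sufficiently large that both $L/\mu_{n_0}$ and $2\epsilon_{n_0}/\mu_{n_0}$ are smaller than $(b'-a')/4$; this is possible since the hypothesis $c_n \mu_n < \mu_{n+1}$ forces $\mu_n \to \infty$. Then an entire connected component of $A_{n_0}(\epsilon_{n_0})$ fits inside $[a', b']$, and I pick such a component to serve as $I_{n_0}$.

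For the inductive step, given $I_n = [\alpha_n, \beta_n]$ of length $\ell_n = 2\epsilon_n/\mu_n$, I observe that as $s$ ranges over $I_n$, the quantity $\mu_{n+1} s$ sweeps out an interval of length
$$\mu_{n+1} \ell_n \;=\; \frac{2\mu_{n+1}\epsilon_n}{\mu_n} \;>\; 2 c_n \epsilon_n \;=\; 2L,$$
using $\mu_{n+1}/\mu_n > c_n$ and the choice $\epsilon_n = L/c_n$. Since this sweep exceeds one full period, the image $\mu_{n+1} I_n$ contains a subinterval of the form $[t - \epsilon_{n+1} + kL,\; t + \epsilon_{n+1} + kL]$ for some integer $k$. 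Pulling this back under the map $s \mapsto \mu_{n+1} s$ produces a closed subinterval $I_{n+1} \subset I_n$ of length $2\epsilon_{n+1}/\mu_{n+1}$ on which $|\mu_{n+1} s - t - kL| \le \epsilon_{n+1}$.

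By Cantor's nested interval theorem, $\bigcap_{n \ge n_0} I_n$ is nonempty. Any point $s$ in this intersection lies in $[a',b'] \subset (a,b)$ and satisfies $\mu_n s \to t$ modulo $L$ because $\epsilon_n \to 0$. The only substantive step is the growth estimate $\mu_{n+1}\ell_n > L$; this is the step that forces the specific calibration $\epsilon_n = L/c_n$, and it is precisely here that the super-geometric growth hypothesis $c_n \mu_n < \mu_{n+1}$ with $c_n \nearrow \infty$ is used. Everything else is bookkeeping (verifying the starting interval fits in $[a',b']$, and that $I_{n+1}$ is a genuine subinterval of $I_n$ rather than merely overlapping it, which is automatic once the sweep length exceeds $L$).
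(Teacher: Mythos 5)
The paper itself offers no proof of this lemma: it is quoted verbatim from \cite{pom1} (Lemma 6.1 there), and the reader is referred to that paper, so there is no in-paper argument to compare against. Your nested-interval construction is a natural and essentially sound way to prove the statement, but as written the inductive step has a genuine (if small) gap. You justify the existence of a full interval $[t+kL-\epsilon_{n+1},\,t+kL+\epsilon_{n+1}]$ inside the image $\mu_{n+1}I_n=[\alpha,\beta]$ by saying the sweep ``exceeds one full period''. What is actually needed is $\beta-\alpha\ge L+2\epsilon_{n+1}$, i.e.\ the shrunk interval $[\alpha+\epsilon_{n+1},\beta-\epsilon_{n+1}]$ must have length at least $L$ so that it captures a lattice point $t+kL$ with margin $\epsilon_{n+1}$ on both sides. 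With your calibration $\epsilon_{n+1}=L/c_{n+1}$, the hypotheses only give $c_{n+1}>c_1>1$, so $\epsilon_{n+1}$ may be close to $L$ and $L+2\epsilon_{n+1}$ close to $3L$, while your sweep bound is only $>2L$: for instance $c_n=1.5$, $c_{n+1}=1.6$, $\mu_{n+1}=1.51\,\mu_n$ is admissible, yet the required inequality $2\mu_{n+1}/(c_n\mu_n)\ge 1+2/c_{n+1}$ reads $2.013\ge 2.25$ and fails, so nothing guarantees that a full $2\epsilon_{n+1}$-interval of the lattice neighborhood sits inside $\mu_{n+1}I_n$. The same regime $2\epsilon_n\ge L$ also invalidates the description of $A_n(\epsilon_n)$ as a union of separated intervals of length $2\epsilon_n/\mu_n$, which your base step uses.

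The repair is immediate and costs nothing: since $c_n\to\infty$, add to your choice of $n_0$ the requirement $c_{n_0}\ge 2$. Then $\epsilon_n\le L/2$ for all $n\ge n_0$, the set $A_n(\epsilon_n)$ has the component structure you describe, and the sweep estimate $\mu_{n+1}\ell_n>2L\ge L+2\epsilon_{n+1}$ does guarantee a lattice point in $[\alpha+\epsilon_{n+1},\beta-\epsilon_{n+1}]$, hence a full interval $I_{n+1}\subset I_n$ of length $2\epsilon_{n+1}/\mu_{n+1}$ on which $|\mu_{n+1}s-t-kL|\le\epsilon_{n+1}$. (Equivalently, one can drop all mention of ``connected components'' and simply define $I_n$ as the preimage of a chosen lattice neighborhood, which sidesteps the overlap issue.) With that one additional condition on $n_0$, your Cantor intersection argument goes through and yields $s\in[a',b']\subset(a,b)$ with $\mathrm{dist}(\mu_n s-t,\,L\mathbb{Z})\le\epsilon_n\to 0$, as required.
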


\section*{Acknowledgments}
{The research of FC was partially supported  by the AFOSR Grant  FA9550-20-1-0024 and  NSF Grant DMS-18-13492. The research of MSV was partially supported by NSF Grant DMS-12-11330. This work was carried out while MSV was visiting the University of Copenhagen and the Danish Technical University. This visit was in part supported by the Nordea Foundation and the Otto Mo \hskip -10pt /nsted Foundation.}

\end{document}